\renewcommand{\eqref}[1]{\hyperref[#1]{(\ref{#1})}}
\newlist{enumlist}{enumerate}{2}
\setlist[enumlist,1]{labelindent=0cm,label=\arabic*.,ref=\arabic*,labelwidth=2.5ex,labelsep=0.5ex,leftmargin=3ex,align=left,topsep=0.5ex,itemsep=1ex,parsep=1ex}
\setlist[enumlist,2]{labelindent=0cm,label=\theenumlisti.\arabic*.,ref=\arabic*,labelwidth=5ex,labelsep=0.5ex,leftmargin=5.5ex,align=left,topsep=0.5ex,itemsep=1ex,parsep=1ex}
\newlist{itemlist}{itemize}{1}
\setlist[itemlist]{labelindent=0cm,label=$\bullet$,labelwidth=2.5ex,labelsep=0.5ex,leftmargin=3ex,align=left,topsep=0.5ex,itemsep=1ex,parsep=1ex}
\numberwithin{equation}{section}
\theoremstyle{definition}\newtheorem{definition}{Definition}[section]
\newtheorem*{definition*}{Definition}
\newtheorem{remark}[definition]{Remark}
\newtheorem{problem}[definition]{Open problem}
\newtheorem*{example*}{Example}
\newtheorem*{examples*}{Examples}}
\newtheorem{proposition}[definition]{Proposition}
\newtheorem{lemma}[definition]{Lemma}
\newtheorem{theorem}[definition]{Theorem}
\newtheorem{letterthm}{Theorem}
\theoremstyle{definition}}
\renewcommand{\Re}{\operatorname{Re}}
\newcommand{\C}{\mathbb{C}}
\newcommand{\cC}{\mathcal{C}}
\newcommand{\eps}{\varepsilon}
\newcommand{\al}{\alpha}
\newcommand{\be}{\beta}
\newcommand{\ot}{\otimes}
\newcommand{\Z}{\mathbb{Z}}
\newcommand{\vphi}{\varphi}
\newcommand{\id}{\mathord{\text{\rm id}}}
\newcommand{\om}{\omega}
\newcommand{\N}{\mathbb{N}}
\newcommand{\ovt}{\mathbin{\overline{\otimes}}}
\newcommand{\si}{\sigma}
\newcommand{\R}{\mathbb{R}}
\newcommand{\cZ}{\mathcal{Z}}
\newcommand{\Ad}{\operatorname{Ad}}
\newcommand{\cF}{\mathcal{F}}
\newcommand{\actson}{\curvearrowright}
\newcommand{\cU}{\mathcal{U}}
\newcommand{\Aut}{\operatorname{Aut}}
\newcommand{\supp}{\operatorname{supp}}
\newcommand{\Out}{\operatorname{Out}}
\newcommand{\Inn}{\operatorname{Inn}}
\newcommand{\St}{\mathscr{S}}
\newcommand{\Serg}{\St_{\text{\rm erg}}}
\newcommand{\Erg}{\mathord{\text{\textit{Erg}}}}
\begin{document}
\begin{center}
{\boldmath\LARGE\bf Ergodic states on type III$_1$ factors\vspace{0.5ex}\\ and ergodic actions}

\bigskip

{\sc by Amine Marrakchi\footnote{UMPA, CNRS ENS de Lyon, Lyon (France). E-mail: amine.marrakchi@ens-lyon.fr} and Stefaan Vaes\footnote{\noindent KU~Leuven, Department of Mathematics, Leuven (Belgium). E-mail: stefaan.vaes@kuleuven.be.\\ S.V. is supported by FWO research project G090420N of the Research Foundation Flanders and by Methusalem grant METH/21/03 –- long term structural funding of the Flemish Government.}}
\end{center}

\begin{abstract}\noindent
Since the early days of Tomita-Takesaki theory, it is known that a von Neumann algebra $M$ that admits a state $\vphi$ with trivial centralizer $M_\vphi$ must be a type III$_1$ factor, but the converse remained open. We solve this problem and prove that such ergodic states form a dense $G_\delta$ set among all faithful normal states on any III$_1$ factor with separable predual. Through Connes' Radon-Nikodym cocycle theorem, this problem is related to the existence of ergodic cocycle perturbations for outer group actions, which we consider in the second part of the paper.
\end{abstract}

\section{Introduction}

The \emph{centralizer} $M_\vphi$ of a faithful normal state $\vphi$ on a von Neumann algebra $M$ is defined as the von Neumann subalgebra of elements $a \in M$ satisfying the trace-like property $ \vphi(ax) = \vphi(x a)$ for all $x \in M$. The centralizer $M_\vphi$ coincides with the fixed point subalgebra of the modular automorphism group $(\si_t^\vphi)_{t \in \R}$. The state $\vphi$ is a \emph{trace} when $M_\vphi=M$, or equivalently when $(\si_t^\vphi)_{t \in \R}$ is trivial. At the extreme opposite, the state $\vphi$ is said to be \emph{ergodic} if $M_\vphi = \C 1$, which amounts to requiring that the modular automorphism group $(\si_t^\vphi)_{t \in \R}$ acts ergodically on $M$.

It is rather difficult to produce ergodic states. In fact, the first example of a nontrivial factor with an ergodic state was constructed by using the CAR functor from quantum field theory, see the corollary of \cite[Theorem 1 in Section 3]{HT70}. After Connes established in \cite{Con72} his classification of type III factors into subtypes III$_\lambda, \: \lambda \in [0,1]$, Longo proved in \cite[Proof of Theorem 3]{Lon78} that if a von Neumann algebra $M$ admits an ergodic faithful normal state then it must be a type III$_1$ factor. However, the converse implication remained an open problem. Given the importance of type III$_1$ factors in quantum field theory (see e.g.\ \cite{Yng04}), this question continued to pop up in the literature. It also appeared explicitly in \cite{MU12}, where a natural Banach space geometry property for the predual of a von Neumann algebra $M$ is shown to be equivalent to the absence of ergodic states on corners $pMp$. Our first main result solves this question affirmatively.

For every von Neumann algebra $M$, we denote by $\St(M)$ the set of faithful normal states on $M$ and by $\Serg(M)$ the subset of ergodic faithful normal states. We denote by $\St_1(M)$ the Polish space of all normal states on $M$.

\begin{letterthm}\label{thmA}
Let $M \neq \C 1$ be a nontrivial von Neumann algebra with separable predual. Then the following statements are equivalent.
\begin{enumlist}
\item $M$ is a type III$_1$ factor.
\item $\Serg(M)$ is nonempty.
\item $\Serg(M)$ is a dense $G_\delta$ subset of $\St_1(M)$.
\end{enumlist}
\end{letterthm}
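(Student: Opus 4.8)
The plan is to establish the three equivalences by proving $(1)\Rightarrow(3)\Rightarrow(2)\Rightarrow(1)$. The implications $(3)\Rightarrow(2)$ is trivial (a dense $G_\delta$ subset of a nonempty complete metric space is nonempty, and $\St(M)$ is nonempty since $M$ has separable predual), and $(2)\Rightarrow(1)$ is the classical result from \cite{Lon78}, so the entire content lies in $(1)\Rightarrow(3)$: assuming $M$ is a type III$_1$ factor with separable predual, show that $\Serg(M)$ is a dense $G_\delta$ in the Polish space $\St(M)$ equipped with the norm topology from $M_*$.

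For the $G_\delta$ part, the first step is to write $\Serg(M) = \bigcap_n G_n$ where each $G_n$ is open. The natural choice exploits that $M_\vphi = \C1$ fails precisely when there is a nontrivial projection in $M_\vphi$, equivalently a unitary $u \neq 1$ fixed by $(\si_t^\vphi)$, equivalently (via the bounded generator picture or via $u$ with $\vphi(u^*xu) = \vphi(x)$ for all $x$) a self-adjoint element $a$ with $\|a\| \le 1$, $\vphi(a) = 0$, $\|a\|_\vphi \ge \eps$ that almost commutes with $\vphi$. Concretely I would fix a $\|\cdot\|$-dense sequence $(x_k)$ in the unit ball of $M$ and set, for each $n$,
\[
F_n = \Bigl\{\vphi \in \St(M) : \exists\, a = a^* \in M,\ \|a\|\le 1,\ \vphi(a)=0,\ \|a\|_\vphi^\#\ge \tfrac1n,\ \textstyle\sum_k 2^{-k}\,\|[a,x_k]\|_\vphi^\# \le \tfrac1n\Bigr\},
\]
and show $F_n$ is closed in $\St(M)$ (using a compactness argument on the unit ball of $M$ in a suitable topology, plus lower/upper semicontinuity of the seminorms $\|\cdot\|_\vphi^\#$ in $\vphi$) and that $\Serg(M) = \St(M) \setminus \bigcup_n F_n$. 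Making these semicontinuity and compactness statements precise — in particular passing to the limit in ``$a$ almost commutes with $\vphi$'' — is somewhat delicate but standard.

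The heart of the matter is density: given any $\vphi_0 \in \St(M)$ and $\eps > 0$, produce $\vphi \in \Serg(M)$ with $\|\vphi - \vphi_0\| < \eps$. Here I would use that $M$, being a type III$_1$ factor with separable predual, is isomorphic to $M \ovt R_\infty$ where $R_\infty$ is the unique injective type III$_1$ factor (or at least tensorially absorbs a copy of some III$_1$ factor carrying an ergodic state), and more usefully that on $R_\infty = R \ovt R_\lambda$-type decompositions one has very explicit ergodic states — e.g.\ Powers/Araki--Woods states, or the state coming from the flow of weights being the full group $\R \actson \R$. The strategy: perturb $\vphi_0$ slightly so that it splits as a tensor product $\psi \ot \om$ with respect to a tensor decomposition $M \cong N \ovt P$ where $P$ is a III$_1$ factor and $\om$ can be chosen ergodic on $P$; then $(\psi\ot\om)_{\psi\ot\om} \subseteq N_\psi \ovt P_\om = N_\psi \ovt \C1$, which is not yet $\C1$. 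So one genuinely needs to additionally ``mix'' the $N$-part into the ergodic flow. This is exactly where Connes' Radon--Nikodym cocycle theorem enters (as flagged in the abstract): realize $(\si_t^{\vphi_0})$ up to cocycle perturbation, and use an \emph{ergodic cocycle perturbation} of this flow — the existence of which, over a type III$_1$ factor, is the second-part input — to replace $\vphi_0$ by a nearby state whose modular flow is ergodic. Concretely, if $u_t$ is a $\si^{\vphi_0}$-cocycle with $\|u_t - 1\|$ small for $t$ near $0$ and such that the perturbed flow $\Ad(u_t)\si_t^{\vphi_0}$ is ergodic, then the corresponding state $\vphi$ (with $[D\vphi : D\vphi_0]_t = u_t$) is ergodic and close to $\vphi_0$ in norm, provided the cocycle is close enough to the trivial one near $t=0$ (norm-closeness of states follows from $\|u_t - 1\| \to 0$ estimates plus $\vphi = \vphi_0(u_{-i/2}^* \cdot u_{-i/2})$ heuristics made rigorous via the standard form / analytic generator bounds).

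The main obstacle, then, is the construction of such an ergodic cocycle perturbation that is simultaneously \emph{small near $t = 0$}: ergodicity of the perturbed flow is an asymptotic/global condition on the cocycle, while norm-closeness of the states forces $u_t \to 1$ as $t \to 0$ uniformly enough. Reconciling these is the technical crux. I expect the paper handles this by a Baire-category / approximation argument \emph{inside} the space of cocycles (showing ergodic cocycles are dense, or generic, among all cocycles that are small near $0$), which dovetails with the $G_\delta$ formulation: one shows that for a generic $\vphi$ near $\vphi_0$, already the modular flow is ergodic. An alternative route, which I would also keep in mind, is to build the ergodic state directly by an inductive/approximation construction choosing finite-dimensional subfactors and near-product states adapted to a Powers-type decomposition, using Connes--Størmer transitivity of the modular flow action on states to correct at each stage; but the cocycle approach is the one the abstract points to and is likely cleaner given the type III$_1$ hypothesis.
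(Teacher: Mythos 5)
Your overall skeleton (Baire category in $\St(M)$, with $3\Rightarrow 2$ trivial and $2\Rightarrow 1$ from Longo's argument) matches the paper, but both halves of your $1\Rightarrow 3$ have genuine gaps. First, the $G_\delta$ parametrization: as written, your sets $F_n$ ask for a self-adjoint contraction $a$ with $\vphi(a)=0$, $\|a\|_\vphi\geq 1/n$, that almost commutes with a norm-dense sequence $(x_k)$ of the unit ball of $M$ — that condition detects almost \emph{central} elements of $M$ (central sequences), not elements of the centralizer $M_\vphi$, so $\Serg(M)=\St(M)\setminus\bigcup_n F_n$ fails. If instead you meant ``$a$ almost commutes with $\vphi$'' (i.e.\ $\|a\vphi-\vphi a\|$ small), the identity fails for the opposite reason: by Connes--St{\o}rmer transitivity the asymptotic centralizer $M^{\om,\vphi}$ of \emph{every} faithful normal state on a III$_1$ factor is a diffuse II$_1$ algebra (Lemma \ref{lem.asympt-centr-factor}), so every $\vphi\in\St(M)$, ergodic or not, admits trace-zero elements far from the scalars that commute with $\vphi$ up to any prescribed tolerance, and $\bigcup_n F_n$ would be all of $\St(M)$. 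This is exactly why the paper does not quantify over ``almost centralizing'' elements but instead fixes a nonscalar $x$ and uses the open sets $U_x$ of states whose modular flow moves $x$ by a definite fraction of $\|x-\vphi(x)1\|_\vphi$, together with the averaging Lemma \ref{lem.average} to convert ``$x$ is not moved much by $\si^\vphi$'' into ``$E_{M_\vphi}(x)$ is nonscalar''.

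Second, and more seriously, your density argument does not go through. The tensor splitting $M\cong N\ovt P$ with $P$ a III$_1$ factor carrying an ergodic state is not available: there exist prime III$_1$ factors, and the absorption $M\cong M\ovt R_\infty$ for arbitrary III$_1$ factors is essentially Connes' bicentralizer problem, not a quotable fact. Your fallback — find a $\si^{\vphi_0}$-cocycle $(u_t)$, small near $t=0$, whose perturbation of the modular flow is ergodic — is precisely a restatement of the theorem (such a cocycle is the Connes cocycle of the desired nearby ergodic state), and it cannot be imported from the second part of the paper: Theorem \ref{thmB} concerns \emph{discrete} amenable groups acting on II$_1$ factors, and the analogue for $\R$-flows is explicitly listed as an open problem in Section \ref{sec.problems}. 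At the crux you acknowledge this (``I expect the paper handles this by\ldots''), so the key mechanism is missing. The paper's actual density step is much more local: given $\psi$ and a nonscalar $x$, one never builds an ergodic state near $\psi$; one uses Popa's local quantization inside the II$_1$ factor $M^{\om,\psi}$ to extract a partition of unity $(p_i)$ almost commuting with $\psi$ that kills $E_{M_\psi}(x)-\psi(x)1$ under the map $y\mapsto\sum_i p_i y p_i$, then perturbs to $\varphi=\sum_i\lambda_i p_i\psi p_i$ with generic $\lambda_i$ so that $M_\varphi\subset\{p_1,\dots,p_n\}'\cap M$, and applies the averaging lemma to get $t$ with $\|\si_t^\varphi(x)-x\|_\varphi$ large; Baire category then assembles genericity of ergodicity from these one-element statements. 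Without this (or an equivalent substitute), your proposal reduces the theorem to an assertion at least as hard as the theorem itself.
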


The proof of Theorem~\ref{thmA}  is based on a Baire category argument. In order to write $\Serg(M)$ as a countable intersection of dense open sets in $\St(M)$, we combine the Connes-St{\o}rmer transitivity theorem \cite[Theorem 4]{CS76} and Popa's ``local quantization'' theorem \cite[Theorem A.1.2]{Pop92}. As we explain in Remark \ref{rem.not-separable}, the separability assumption is essential: there are type III$_1$ factors $M$ with nonseparable predual that are countably decomposable (i.e.\ $\St(M)$ is nonempty) but for which $\Serg(M)$ is empty.

Given $\vphi,\psi \in \St(M)$, Connes' Radon-Nikodym theorem (see \cite[Th\'{e}or\`{e}me 1.2.1]{Con72}) provides a continuous $1$-cocycle $u_t = [D\psi : D\vphi]_t$ for $\sigma^\vphi$ satisfying $\sigma_t^\psi = \Ad u_t \circ \sigma_t^\vphi$ for all $t \in \R$. Theorem~\ref{thmA} is thus naturally connected to the question which group actions $G \actson^\al M$ on a von Neumann algebra admit an ergodic cocycle perturbation.

When $\Gamma$ is a countable amenable group and $M$ is a II$_1$ factor with separable predual, we prove the following general result. We denote by $\cC(\al)$ the space of $1$-cocycles, i.e.\ maps $v : \Gamma \to \cU(M) : g \mapsto v_g$ satisfying $v_{gh} = v_g \al_g(v_h)$ for all $g,h \in \Gamma$. Using the topology of pointwise $\|\cdot\|_2$-convergence, $\cC(\al)$ is a Polish space. For every $v \in \cC(\al)$, we denote by $\Ad v \circ \al$ the action defined by $(\Ad v \circ \al)_g = (\Ad v_g) \circ \al_g$ for all $g \in \Gamma$.

\begin{letterthm}\label{thmB}
Let $\Gamma$ be a countably infinite amenable group and $\Gamma \actson^\al M$ an outer action of $\Gamma$ on a II$_1$ factor $M$ with separable predual.

Then $\{v \in \cC(\al) \mid \Ad v \circ \al \;\;\text{is ergodic}\;\}$ is a dense $G_\delta$ subset of $\cC(\al)$. In particular, $\al$ admits a cocycle perturbation that is ergodic.
\end{letterthm}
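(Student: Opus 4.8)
The argument is a Baire category argument in the Polish space $\cC(\al)$, in the spirit of the proof of Theorem~\ref{thmA}. Write $\tau$ for the trace on $M$ and fix a $\|\cdot\|_2$-dense sequence $(x_k)_{k\ge1}$ in the unit ball of $M$. For $k,m\ge1$, let $U_{k,m}$ be the set of those $v\in\cC(\al)$ for which there exist $g_1,\dots,g_r\in\Gamma$ and $\lambda_1,\dots,\lambda_r\ge0$ with $\sum_i\lambda_i=1$ and $\bigl\|\sum_i\lambda_i(\Ad v\circ\al)_{g_i}(x_k)-\tau(x_k)1\bigr\|_2<1/m$. For each fixed $g$, the map $v\mapsto v_g\,\al_g(x_k)\,v_g^{*}$ is $\|\cdot\|_2$-continuous on $\cC(\al)$ (using $\|ab\|_2\le\|a\|_2\|b\|$, $\|ba\|_2\le\|b\|\,\|a\|_2$ and $\|v_g-v_g'\|_2=\|v_g^{*}-(v_g')^{*}\|_2$), so each $U_{k,m}$ is open. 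If $\Ad v\circ\al$ is ergodic, then for every $k$ the scalar $\tau(x_k)1=E_{M^{\Ad v\circ\al}}(x_k)$ lies in the $\|\cdot\|_2$-closed convex hull of the orbit $\{(\Ad v\circ\al)_g(x_k):g\in\Gamma\}$, hence $v\in\bigcap_m U_{k,m}$. Conversely, if $v\in\bigcap_{k,m}U_{k,m}$, then $\tau(x_k)1$ lies in that closed convex hull for every $k$; every element of the latter has trace $\tau(x_k)$ and therefore $\|\cdot\|_2$-norm at least $|\tau(x_k)|$, so $\tau(x_k)1$ is its unique element of minimal norm, whence $E_{M^{\Ad v\circ\al}}(x_k)=\tau(x_k)1$ and, by density, $M^{\Ad v\circ\al}=\C1$. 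Thus $\bigcap_{k,m}U_{k,m}$ is precisely the set of ergodic cocycle perturbations of $\al$, so by Baire's theorem it suffices to prove that every $U_{k,m}$ is dense; the final assertion then follows since a dense $G_\delta$ subset of a nonempty Polish space is nonempty.

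To prove density I would first remove the reference point. For $v_0\in\cC(\al)$ the action $\be_0:=\Ad v_0\circ\al$ is again an outer action of $\Gamma$ on $M$ (if $\Ad(v_{0,g})\circ\al_g$ were inner, so would be $\al_g$), and $w\mapsto(g\mapsto w_g v_{0,g})$ is a homeomorphism $\cC(\be_0)\to\cC(\al)$ sending the trivial cocycle to $v_0$ and satisfying $\Ad(w\cdot v_0)\circ\al=\Ad w\circ\be_0$, so it carries the analogue of $U_{k,m}$ built from $\be_0$ onto $U_{k,m}$. Hence it is enough to establish: \emph{for every outer action $\be$ of a countably infinite amenable group $\Gamma$ on a II$_1$ factor $M$ with separable predual, every $x\in M$ with $\|x\|\le1$, every finite $S\subseteq\Gamma$ and every $\delta>0$, there are a cocycle $v\in\cC(\be)$ with $\|v_g-1\|_2<\delta$ for all $g\in S$ and a finite set $F\subseteq\Gamma$ with $\bigl\|\tfrac1{|F|}\sum_{g\in F}v_g\be_g(x)v_g^{*}-\tau(x)1\bigr\|_2<\delta$.}

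I would prove this by combining Popa's local quantization theorem \cite[Theorem A.1.2]{Pop92} with a Rohlin-type tower for the outer amenable action $\be$. By local quantization applied to $M$ and $x$, there is a finite-dimensional abelian subalgebra $A\subseteq M$ with $\|E_{A'\cap M}(x)-\tau(x)1\|_2<\delta/2$ (here $E_{A'\cap M}(x)$ is the average of $x$ over $\cU(A)$, and $E_{M'\cap M}(x)=\tau(x)1$ as $M$ is a factor). Next, using that $\be$ is outer and $\Gamma$ amenable, choose a F{\o}lner set $F\supseteq S$ with $|gF\mathbin{\triangle}F|/|F|$ tiny for all $g\in S$ and $|F|$ large, and construct an approximate Rohlin tower for $\be$ modelled on $F$: projections of $M$ summing approximately to $1$, of trace approximately $1/|F|$, permuted by $\be$ the way $\Gamma$ permutes the tiles $gF$. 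The tower creates enough room to build a cocycle $v$ — automatically satisfying the cocycle identity because it is defined compatibly with the way $\be$ moves the tower — for which the F{\o}lner average $\tfrac1{|F|}\sum_{g\in F}v_g\be_g(x)v_g^{*}$ differs in $\|\cdot\|_2$ by at most $\delta/2$ from the $\cU(A)$-average $E_{A'\cap M}(x)$, and hence lies within $\delta$ of $\tau(x)1$. Because $F$ is almost invariant under $S$, every $g\in S$ moves only a portion of the tower of small trace, which is what lets one keep $\|v_g-1\|_2<\delta$ for $g\in S$.

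The main obstacle is the Rohlin tower. Since $M$ is not assumed to be McDuff, Ocneanu's noncommutative Rohlin lemma for amenable group actions is not available, so the tower — and the bookkeeping synchronizing it with $A$ and with the almost-invariance under $S$ — has to be produced directly from local quantization. This is the technical heart of the argument, and it plays here the role that the combination of the Connes--St{\o}rmer transitivity theorem \cite[Theorem 4]{CS76} with local quantization plays in the proof of Theorem~\ref{thmA}.
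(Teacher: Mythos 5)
Your first two steps are fine: the sets $U_{k,m}$ are open, the convex-hull characterization of ergodicity is correct (applying $E_{M^{\be}}$ to the closed convex hull of the orbit shows that $\tau(x_k)1$ lies in it if and only if $E_{M^{\be}}(x_k)=\tau(x_k)1$), and the reduction of density to the trivial cocycle by replacing $\al$ with $\Ad v_0\circ\al$ is exactly what one should do. But the whole content of the theorem is the density statement, and there your argument stops at a declaration of intent. You propose to build an approximate Rohlin tower for an arbitrary outer action of an amenable group on an arbitrary II$_1$ factor, and you yourself note that Ocneanu's Rohlin lemma is unavailable here (it requires central freeness, and a general outer action on a general, e.g.\ full, II$_1$ factor need not satisfy any such hypothesis); no substitute construction is given, only the statement that it ``has to be produced directly from local quantization.'' That is precisely the missing idea, not a routine verification. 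Moreover, even granting a tower, the synchronization you invoke is not spelled out and is not obviously achievable: the quantity $\tfrac1{|F|}\sum_{g\in F}v_g\be_g(x)v_g^{*}$ is not an average of conjugates of the single element $x$ (each term involves $\be_g(x)$), so Popa's local quantization, which averages a fixed element over the unitaries of a finite-dimensional abelian algebra, does not directly produce the estimate you want, and it is unclear that the cocycle identity, the $S$-almost-invariance of $v$, and the averaging estimate can be satisfied simultaneously by the scheme you describe.

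For comparison, the paper's proof of Theorem~\ref{thmB} avoids Rohlin towers and local quantization altogether. Its open sets are of the form $\cU_x=\{v \mid \exists h:\ \|v_h\al_h(x)v_h^{*}-x\|_2>\|x\|_2\}$ (ergodicity is then recovered from Lemma~\ref{lem.average}), and density is proved by an ultraproduct freeness argument: by \cite[Lemmas 4.2 and 4.3]{PSV18} there is a unitary $V\in M^{\om}$ such that $M$ and the subfactors $\al^{\om}_h(VMV^{*})$, $h\in\Gamma$, are free, and the restriction of $\al^{\om}$ to the subfactor $P$ they generate has diffuse fixed points (this is where amenability of $\Gamma$ enters). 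One then picks a trace-zero unitary $U\in P_J$ almost fixed by $\al^{\om}_g$ for $g\in\cF$, chooses $h$ with $hJ\cap J=\emptyset$, and uses freeness to get $\|\al^{\om}_h(UxU^{*})-UxU^{*}\|_2^2=2\|x\|_2^2$; an inner cocycle $v_h=u^{*}\al_h(u)$ built from a representative $u$ of $U$ then does the job. Until you can actually produce the Rohlin tower (or replace it by some such mechanism), your proposal does not prove the theorem.
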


In Theorem \ref{thm.ergodic-action-Z}, we deduce from Theorem~\ref{thmB} that an automorphism $\al$ of a II$_1$ factor $M$ admits an ergodic inner perturbation $\Ad u \circ \al$ if and only if all nonzero powers of $\al$ are outer. We also note in Remark \ref{rem.non-separable-actions} that the separability assumption in Theorem~\ref{thmB} is again essential.

In Section \ref{sec.ergodic-actions}, we introduce the class $\Erg$ of countable groups $\Gamma$ with the property that every outer action of $\Gamma$ on a II$_1$ factor $M$ admits an ergodic cocycle perturbation. By Theorem~\ref{thmB}, every infinite amenable group belongs to $\Erg$. It is rather easy to see (Proposition \ref{prop.free-product}) that $\Erg$ is closed under taking a free product with an arbitrary group, so that the free groups belong to $\Erg$. However, ``rigid'' groups do not belong to $\Erg$, see Proposition \ref{prop.no-go-thm}.

Finally, Theorems~\ref{thmA} and \ref{thmB} lead to several open questions and problems that we discuss in Section \ref{sec.problems}.

{\bf Acknowledgment.} We would like to thank Cyril Houdayer for thought provoking discussions on the existence problem of ergodic states. We thank the referee for their excellent remarks.

\section{\boldmath Ergodic states on type III$_1$ factors}

To prove Theorem~\ref{thmA}, we need several lemmas. Already in \cite[Proof of Theorem 3]{Lon78}, it was proven that a von Neumann algebra with an ergodic faithful normal state must be a type III$_1$ factor; see also \cite[Corollary 1.10.8]{Bau95}. For completeness we provide a self-contained argument, also proving that such a state is automatically weakly mixing.

Given $\vphi \in \St(M)$, we write $\|x\|_\vphi = \sqrt{\vphi(x^* x)}$ for all $x \in M$.

\begin{lemma}\label{lem.easy-implication}
Let $\vphi$ be a faithful normal state on a von Neumann algebra $M \neq \C 1$. If $M_\vphi = \C 1$, then $M$ is a type III$_1$ factor and the unitary representation $(\sigma_t^\vphi)_{t \in \R}$ on the orthogonal complement of $\C 1$ inside $L^2(M,\vphi)$ is weakly mixing.
\end{lemma}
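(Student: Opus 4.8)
The plan is to prove the statement in three steps: first, that $M$ is a factor; second, the weak mixing assertion, by showing that $(\sigma_t^\varphi)_{t\in\R}$ has no nonzero eigenvector on $\cH_0:=L^2(M,\varphi)\ominus\C\xi_\varphi$, where $\xi_\varphi$ denotes the GNS cyclic vector; and third, that $M$ is then of type III$_1$, by invoking Connes' structure theory for the invariant $S(M)$. The first step is immediate: every central element of $M$ obviously lies in $M_\varphi$, so $Z(M)\subseteq M_\varphi=\C 1$.

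For the second step, write $\Delta_\varphi$ for the modular operator, so that $\Delta_\varphi^{it}(x\xi_\varphi)=\sigma_t^\varphi(x)\xi_\varphi$, and for $\lambda>0$ put $M_\lambda=\{x\in M:\sigma_t^\varphi(x)=\lambda^{it}x\text{ for all }t\}$ and $\cH_\lambda=\{\eta\in L^2(M,\varphi):\Delta_\varphi^{it}\eta=\lambda^{it}\eta\text{ for all }t\}$. First I would record, via the mean ergodic theorem for the unitary group $t\mapsto\lambda^{-it}\Delta_\varphi^{it}$ and weak-$*$ limit points in $M$ of the bounded Cesàro averages $\frac{1}{2T}\int_{-T}^{T}\lambda^{-it}\sigma_t^\varphi(x)\,dt$, that $\overline{M_\lambda\xi_\varphi}=\cH_\lambda$; in particular $\cH_\lambda\neq 0$ iff $M_\lambda\neq 0$. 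Since $M_1=M_\varphi=\C 1$, this already gives $\cH_1=\C\xi_\varphi\perp\cH_0$, so it remains to show $M_\lambda=0$ for all $\lambda\neq 1$.

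This is the crux. Suppose $\lambda\neq 1$ and $0\neq a\in M_\lambda$. Then $\sigma_t^\varphi(a^*a)=a^*a$ and $\sigma_t^\varphi(aa^*)=aa^*$, so $a^*a,aa^*\in M_\varphi=\C 1$, and after rescaling $u:=a$ is a unitary with $\sigma_t^\varphi(u)=\lambda^{it}u$. Now consider the faithful normal state $\psi=\varphi(u^*\,\cdot\,u)$; a direct computation with $\sigma_t^\varphi(u)=\lambda^{it}u$ yields $\sigma_t^\psi=\sigma_t^\varphi$ for all $t$. Hence Connes' Radon--Nikodym cocycle $w_t=[D\psi:D\varphi]_t$ implements the identity automorphism, so $w_t\in Z(M)=\C 1$; being a norm-continuous $1$-cocycle it equals $\mu^{it}1$ for some $\mu>0$, and uniqueness of the cocycle forces $\psi=\mu\varphi$, whence $\mu=1$ upon evaluating at $1$. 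Thus $\varphi(u^*xu)=\varphi(x)$ for all $x$, i.e.\ $\varphi(xu)=\varphi(ux)$ for all $x$, so $u\in M_\varphi=\C 1$ --- contradicting $\sigma_t^\varphi(u)=\lambda^{it}u$ with $\lambda\neq 1$. So $M_\lambda=0$ for every $\lambda\neq 1$, which shows that $(\sigma_t^\varphi)_{t\in\R}$ has no nonzero eigenvector, hence no nonzero finite-dimensional invariant subspace, on $\cH_0$: it is weakly mixing.

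Finally, for the type: since $M_\varphi=\C 1$ is a factor, Connes' results give $S(M)\setminus\{0\}=\operatorname{Sp}(\Delta_\varphi)\setminus\{0\}$, which is a closed subgroup of the multiplicative group $(0,\infty)$. Were it proper it would be discrete, so each point of $\operatorname{Sp}(\Delta_\varphi)\setminus\{0\}$ would be isolated in $\operatorname{Sp}(\Delta_\varphi)$ and hence an eigenvalue of $\Delta_\varphi$; by the second step the only such eigenvalue is $1$, so $\operatorname{Sp}(\Delta_\varphi)\subseteq\{0,1\}$, and since $\Delta_\varphi$ is injective $0$ cannot be isolated in the spectrum, so $\operatorname{Sp}(\Delta_\varphi)=\{1\}$, making $\sigma^\varphi$ trivial and $M_\varphi=M\neq\C 1$, absurd. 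Hence $S(M)=[0,\infty)$ and $M$ is a type III$_1$ factor. I expect the only nonroutine point to be the third-paragraph step --- manufacturing a unitary eigenoperator from an $L^2$-eigenvector and exploiting that its associated state shares the modular flow of $\varphi$ --- the rest being standard modular theory together with Connes' description of $S(M)$.
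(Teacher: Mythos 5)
Your argument is correct, and its first two steps essentially coincide with the paper's: like the paper, you upgrade a hypothetical eigenvector of the modular flow to a unitary $u \in M$ with $\si_t^\vphi(u) = \lambda^{it}u$ by an averaging argument combined with $M_\vphi = \C 1$. The only difference there is the final contradiction: the paper applies the KMS condition to the analytic element $u$ to get $1 = \vphi(u^*u) = \vphi(\si_i^\vphi(u)u^*) = \lambda^{-1}$, while you observe that $\psi = u\vphi u^*$ has the same modular flow as $\vphi$ and invoke triviality of the center together with the injectivity of $\psi \mapsto [D\psi:D\vphi]$; equivalently, and even more directly, $[D(u\vphi u^*):D\vphi]_t = u\si_t^\vphi(u^*) = \lambda^{-it}$ gives $u\vphi u^* = \lambda^{-1}\vphi$ and hence $\lambda = 1$. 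The genuinely different part is the type III$_1$ step. The paper passes to the continuous core $N = M \rtimes_{\si^\vphi}\R$, uses $M_\vphi = \C 1$ to get $L(\R)' \cap N = L(\R)$, identifies $\cZ(N)$ with $L(\Lambda)$ for the kernel $\Lambda$ of the modular flow, and excludes $\Lambda \neq \{0\}$ by the weak mixing already established. You instead argue through Connes' $S$-invariant: since $M_\vphi = \C 1$ is a factor, $S(M) = \operatorname{Sp}(\Delta_\vphi)$ (with $M_\vphi = \C 1$ this is immediate from $S(M) = \bigcap_{e} \operatorname{Sp}(\Delta_{\vphi_e})$ over nonzero projections $e \in M_\vphi$), $S(M)\cap(0,\infty)$ is a closed subgroup of $\R_{>0}$ because $M$ is a factor, a proper closed subgroup is discrete, its points are isolated in the spectrum and hence eigenvalues of $\Delta_\vphi$, which your weak mixing step forbids except for $1$, and the residual case $\operatorname{Sp}(\Delta_\vphi)\subseteq\{0,1\}$ is killed by injectivity of $\Delta_\vphi$ and $M \neq \C 1$. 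Both routes are valid: yours is shorter if one takes the relevant results of \cite{Con72} (the group property of $S(M)$ and its computation from a weight with factorial centralizer) as given, whereas the paper's core argument is more self-contained and is the formulation that gets reused later, in the proof of Theorem \ref{thm.ergodic-inclusion}, where $L(\R)' \cap c(M) = L(\R)$ is exactly what yields irreducibility of $c(N) \subset c(M)$.
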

\begin{proof}
Denote $H = L^2(M,\vphi)$ and let $\xi_\vphi \in H$ be the vector given by $1 \in M$. Denote by $U_t(x \xi_\vphi) = \sigma_t^\vphi(x) \xi_\vphi$ the unitary representation as in the formulation of the lemma.

We first prove that $U_t$ is weakly mixing on $\xi_\vphi^\perp$. For every $\lambda > 0$, denote by $H_\lambda \subset H$ the closed subspace of vectors $\eta \in \xi_\vphi^\perp$ satisfying $U_t(\eta) = \lambda^{it} \eta$ for all $t \in \R$. Denote by $P_\lambda$ the orthogonal projection of $H$ onto $H_\lambda$. We have to prove that $H_\lambda = \{0\}$ for all $\lambda > 0$. Assume the contrary. We then find $\lambda > 0$ so that $P_\lambda \neq 0$. We thus find $x \in M$ with $\vphi(x) = 0$ and $P_\lambda(x \xi_\vphi) \neq 0$. Define $y \in M$ as the unique element of minimal $\|\cdot\|_\vphi$ in the $\|\cdot\|_\vphi$-closed convex hull of
$$\bigl\{ \lambda^{-it} \si_t^\vphi(x) \bigm| t \in \R \bigr\} \; .$$
Since $P_\lambda(x \xi_\vphi) = y \xi_\vphi$, we have that $y \neq 0$. Also, $\vphi(y) = 0$ and $\sigma_t^\vphi(y) = \lambda^{it} y$ for all $t \in \R$. It follows that $y^* y$ and $y y^*$ belong to $M_\vphi = \C 1$, so that $y$ is a nonzero multiple of a unitary $u \in \cU(M)$ satisfying $\vphi(u) = 0$ and $\sigma_t^\vphi(u) = \lambda^{it} u$ for all $t \in \R$. In particular, the element $u$ is analytic under $(\sigma_t^\vphi)_{t \in \R}$, so that
$$1 = \vphi(u^* u) = \vphi(\si_i^\vphi(u) u^*) = \lambda^{-1} \vphi(uu^*) = \lambda^{-1} \; .$$
We have proven that $\lambda = 1$. This means that $u \in M_\vphi = \C 1$. But $\vphi(u) = 0$ and we have reached a contradiction. So, $U_t$ is weakly mixing on $\xi_\vphi^\perp$.

Since $\cZ(M) \subset M_\vphi$, we have that $M$ is a factor. To see that $M$ is of type III$_1$, we consider the continuous core $N = M \rtimes_{\sigma^\vphi} \R$ with its subalgebra $L(\R)$ and prove that $N$ is a factor. Recall that we may identify $N$ with the subalgebra of $\theta$-fixed points in $M \ovt B(L^2(\R))$, where $\theta_t = \sigma_t^\vphi \otimes \Ad \lambda_{-t}$. The relative commutant of $1 \ot L(\R)$ inside $M \ovt B(L^2(\R))$ equals $M \ovt L(\R)$. On $M \ovt L(\R)$, we have that $\theta_t = \sigma_t^\vphi \ot \id$. Since $M_\vphi = \C 1$, it follows that $L(\R)' \cap N = L(\R)$.

Define the closed subgroup $\Lambda \subset \R$ of $t \in \R$ such that $\si_t^\vphi = \id$. Since $L(\R)' \cap N = L(\R)$, the center of $N$ equals $L(\Lambda) \subset L(\R)$. So if $\Lambda = \{0\}$, it follows that $\cZ(N) = \C 1$ so that $M$ is of type III$_1$. If $\Lambda = \R$, it follows that $\vphi$ is a trace, so that $M = M_\vphi = \C 1$, which we excluded. When $\Lambda$ is a nontrivial closed subgroup of $\R$, the unitary representation $U_t$ is periodic, which contradicts its weak mixing on $\xi_\vphi^\perp$ proven above.
\end{proof}

We will need Popa's ``local quantization'' theorem.
 \begin{theorem}[{\cite[Theorem A.1.2]{Pop92}}] \label{lem.local-quantization}
Let $N$ be a II$_1$ factor and $x \in N$. For every $\varepsilon > 0$, there exists a finite dimensional abelian subalgebra $A \subset N$ such that $\| E_{A' \cap N}(x)-\tau(x)1\|_2 \leq \varepsilon$.
\end{theorem}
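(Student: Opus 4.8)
The plan is to recast the statement as ``crushing every corner'' of $x$ by a partition of unity, and then to build the partition by iterated refinement powered by Dixmier's averaging theorem.

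Replacing $x$ by $x-\tau(x)1$, it suffices to treat $x$ with $\tau(x)=0$ and to produce a finite-dimensional abelian $A\subseteq N$ with $\|E_{A'\cap N}(x)\|_2\le\eps$. If $A$ is the linear span of a partition of unity $1=f_1+\dots+f_k$ into projections, then $A'\cap N=\bigoplus_i f_iNf_i$, so $E_{A'\cap N}(x)=\sum_i f_ixf_i$ is an orthogonal sum in $L^2(N)$ and $\|E_{A'\cap N}(x)\|_2^2=\sum_i\|f_ixf_i\|_2^2$. Thus I must find a finite partition of unity all of whose corners $f_ixf_i$ are small in $\|\cdot\|_2$. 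A suggestive reformulation: if $A$ is the diagonal of a unital copy of $M_n(\C)\subseteq N$ with clock unitary $c=\sum_j\omega^je_{jj}$ ($\omega=e^{2\pi i/n}$), then $A'\cap N=\{c\}'\cap N$ and $E_{A'\cap N}(z)=\tfrac1n\sum_{j=0}^{n-1}c^jzc^{-j}$; and since any two unital copies of $M_n(\C)$ in a II$_1$ factor are unitarily conjugate, the task is to find $v\in\cU(N)$ with $\bigl\|\tfrac1n\sum_{j=0}^{n-1}(c^jv)\,x\,(c^jv)^*\bigr\|_2\le\eps$, i.e.\ to place a finite-dimensional matrix subfactor ``transversally'' to $x$.

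To build the partition, I would refine iteratively, using two moves: (a) pick a corner $y_i:=f_ixf_i$ that is not yet close in $\|\cdot\|_2$ to a scalar multiple of $f_i$ and split $f_i=g+(f_i-g)$; (b) if every corner is close to a scalar $\lambda_if_i$ but the $\lambda_i$ are not all equal (they average to $\tau(x)=0$ against the weights $\tau(f_i)$), merge two blocks with $\lambda_i\neq\lambda_j$ and resplit. Since $y_i=gy_ig+(f_i-g)y_i(f_i-g)+gy_i(f_i-g)+(f_i-g)y_ig$ is an orthogonal decomposition in $L^2(N)$, move (a) drops the corner sum $\sum_i\|f_ixf_i\|_2^2$ by exactly the ``off-diagonal weight'' $\|gy_i(f_i-g)\|_2^2+\|(f_i-g)y_ig\|_2^2$ of the split, and (b) reduces to (a) on the merged block. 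The quantitative input — where Dixmier's averaging theorem ($\tau(x)1$ lies in the operator-norm closed convex hull of the unitary conjugates of $x$) enters — is the claim that in a II$_1$ factor an element $y$ which is $\delta$-far in $\|\cdot\|_2$ from the scalars admits a split of off-diagonal weight $\geq c\,\delta^2$ for an absolute $c>0$. Granting it, and noting that when neither move applies one automatically has $\|E_{A'\cap N}(x)\|_2\le C\delta$ for an absolute $C$, the corner sum drops below $\eps^2$ after boundedly many steps.

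\textbf{Main obstacle.} The delicate point is precisely this off-diagonal-weight estimate. Note first that enlarging $A$ does not help: as $A$ grows to a maximal abelian subalgebra $B$, the maps $E_{A'\cap N}$ converge to $E_B$, not to $\tau(\cdot)1$. Note also that the estimate would be immediate, with $c=\tfrac12$, if $y$ could be made free from a projection of trace $\tfrac12$ (a short free-probability computation), equivalently if $N$ contained a finite-dimensional subfactor free from $x$; but a general II$_1$ factor contains no free pair of subalgebras, so no such shortcut exists. Extracting a uniformly substantial off-diagonal weight in an arbitrary II$_1$ factor — in particular in the worst case where $y$ is nearly a scalar multiple of a projection and off-diagonal weight is scarce — is the real content of the theorem, and the place where Popa's argument does its work.
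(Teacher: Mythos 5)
You should first note that the paper does not prove this statement at all: it is imported verbatim from \cite[Theorem A.1.2]{Pop92}, so your argument has to stand on its own — and as written it does not. The soft parts are fine: the reduction to $\tau(x)=0$, the identity $\|E_{A'\cap N}(x)\|_2^2=\sum_i\|f_ixf_i\|_2^2$ when $A$ is spanned by the partition of unity $(f_i)$, and the observation that refining $f_i=g+(f_i-g)$ decreases the corner sum by exactly the off-diagonal weight of the split. But the entire analytic content of the theorem is then concentrated in the claim that a corner $y=f_ixf_i$ which is $\delta$-far in $\|\cdot\|_2$ from $\C f_i$ admits a subprojection $g\leq f_i$ with $\|gy(f_i-g)\|_2^2+\|(f_i-g)yg\|_2^2\geq c\,\delta^2$, and this claim is asserted, not proved. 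Dixmier averaging is invoked by name but never actually deployed: it produces unitaries averaging $y$ towards $\tau(y)1$ in \emph{operator norm}, and no step is given that converts them into a single projection, transverse to $y$, capturing a definite amount of off-diagonal $\|\cdot\|_2$-mass — which, as you yourself say, is ``the real content of the theorem'' (spectral projections of $y$ capture none, which is exactly why the worst case you mention is hard). So what you have is a reduction of Popa's theorem to an unproved key lemma, not a proof. Even granting that lemma, the iteration is not closed: move (b) first \emph{increases} the corner sum by $\|f_ixf_j\|_2^2+\|f_jxf_i\|_2^2$ when the two blocks are merged, so you must show the subsequent split wins by a definite margin, and the claim that the corner sum ``drops below $\eps^2$ after boundedly many steps'' needs a uniformly decreasing quantity or an infimum argument, none of which is set up (also, ``close to a scalar multiple of $f_i$'' must be normalized by $\|f_i\|_2$ for the per-block errors to add up to a global one).

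One constructive remark: the freeness shortcut you dismiss is in fact viable once you allow the ultrapower, and it is the same device this paper uses to prove Theorem~B. By freeness-in-$N^\omega$ results of the type of \cite[Lemma 4.2]{PSV18}, one can find inside $N^\omega$ a diffuse abelian subalgebra free from a separable subalgebra of $N$ containing $x$; choosing in it a partition of unity $P_1,\dots,P_n$ with $\tau^\omega(P_i)=1/n$, your own free computation gives $\|\sum_i P_i x P_i-\tau(x)1\|_2^2=\tfrac{1}{n}\|x-\tau(x)1\|_2^2$, and lifting the $P_i$ to partitions of unity of projections in $N$ and passing along the ultrafilter produces the required finite dimensional abelian $A\subset N$ (the general, nonseparable case reduces to this by placing $x$ in a subfactor with separable predual, and noting that $E_{A'\cap N}$ restricted to that subfactor is still $z\mapsto\sum_i p_izp_i$). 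Popa's original argument is different and more elementary, but some quantitative mechanism of this kind has to be supplied; it cannot be left as a black box.
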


For our proof, we also need to reformulate Connes and St{\o}rmer's transitivity theorem, \cite[Theorem 4]{CS76} in the following way. Let $M$ be a von Neumann algebra and $\vphi \in \St(M)$. We define the \emph{asymptotic centralizer} $M^{\om,\vphi}$ of $\vphi$ as the quotient
$$
M^{\om,\vphi} = \frac{\bigl\{x \in \ell^\infty(\N,M) \bigm| \lim_{n \to \om} \|x_n \vphi - \vphi x_n\| = 0 \bigr\}}{\bigl\{x \in \ell^\infty(\N,M) \bigm| \lim_{n \to \om} (\|x_n\|_\vphi + \|x_n^*\|_\vphi) = 0 \bigr\}} \; .
$$
Note that $M^{\om,\vphi}$ is a von Neumann algebra and that the formula $\vphi^\om(x) = \lim_{n \to \om} \vphi(x_n)$ defines a faithful normal tracial state on $M^{\om,\vphi}$.

Although we do not need this here, note that by \cite[Lemma 4.36]{AH12}, the von Neumann algebra $M^{\om,\vphi}$ coincides with the centralizer of the ultraproduct state $\vphi^\om$ on the Ocneanu ultrapower $M^\om$. Using that result, the following lemma is a consequence of \cite[Theorem 4.20 and Proposition 4.24]{AH12} and even holds without separability assumptions. We include a direct proof of this lemma that does not use the Ocneanu ultrapower but only the asymptotic centralizer. Still the argument is essentially the same as in \cite{AH12}.

\begin{lemma}\label{lem.asympt-centr-factor}
Let $M$ be a type III$_1$ factor with separable predual and $\vphi \in \St(M)$. Then $M^{\om,\vphi}$ is a II$_1$ factor.
\end{lemma}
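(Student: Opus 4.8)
The plan is to show first that $M^{\om,\vphi}$ is a factor and then that it is of type II$_1$; the tracial state $\vphi^\om$ is already provided by the construction, so the type II$_1$ part reduces to checking that $M^{\om,\vphi}$ is diffuse (no minimal projections). For factoriality, I would use the type III$_1$ hypothesis in the form of the Connes--St\o rmer transitivity theorem \cite[Theorem 4]{CS76}, which is the natural replacement for the ``approximate independence'' input. Concretely, given a central element $z \in M^{\om,\vphi}$ represented by a bounded sequence $(z_n)$ that asymptotically commutes with $\vphi$, I want to show $z = \vphi^\om(z) 1$. The key point is that since $\vphi^\om$ is a trace on $M^{\om,\vphi}$, one may assume $z = z^*$, and then it suffices to show that for any spectral projection $p = p^* = p^2 \in M^{\om,\vphi}$ with $0 < \vphi^\om(p) < 1$, one reaches a contradiction; equivalently, it suffices to build, for any two projections $p,q \in M^{\om,\vphi}$ with $\vphi^\om(p) = \vphi^\om(q)$, a partial isometry $v \in M^{\om,\vphi}$ with $v^*v = p$, $vv^* = q$ (i.e.\ the trace is faithful on the projection lattice up to MvN-equivalence). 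This is exactly where transitivity enters: representing $p$ by projections $p_n \in M$ with $\|p_n \vphi - \vphi p_n\| \to 0$, the states $\vphi(p_n \cdot p_n)/\vphi(p_n)$ on $p_n M p_n$ are asymptotically $\vphi$-close, and transitivity produces unitaries $u_n \in \cU(M)$ conjugating $p_n$ towards $q_n$ while controlling $\|u_n \vphi - \vphi u_n\|$.

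In more detail, I would proceed as follows. \emph{Step 1 (reduction).} Reduce to showing: if $e \in M^{\om,\vphi}$ is a nonzero projection, then for every $\eps > 0$ there is a nonzero subprojection $f \le e$ in $M^{\om,\vphi}$ with $\vphi^\om(f) \le \eps$; this simultaneously gives diffuseness (no minimal projections, hence type II$_1$ once we have factoriality) and feeds the factoriality argument. \emph{Step 2 (halving via transitivity).} Represent $e$ by projections $e_n \in M$ that asymptotically commute with $\vphi$. Inside each corner $e_n M e_n$, which is again a (type III$_1$, but we only need: non-type-I) factor, use Connes--St\o rmer transitivity \cite[Theorem 4]{CS76} together with the fact that a type III factor has no minimal projections to find a projection $g_n \le e_n$ with $\vphi(g_n)$ close to $\tfrac12 \vphi(e_n)$ and, crucially, with $\|g_n \vphi - \vphi g_n\|$ small: here transitivity is applied to move a fixed ``halving'' projection (available because $e_nMe_n$ is non-type-I) to one that is almost $\vphi$-central, by conjugating with a unitary whose $\vphi$-commutator is controlled by the transitivity estimate. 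Letting $g = (g_n) \in M^{\om,\vphi}$ gives $g \le e$ with $\vphi^\om(g) = \tfrac12 \vphi^\om(e)$. Iterating gives subprojections of arbitrarily small trace, proving diffuseness. \emph{Step 3 (factoriality).} For a central projection $p \in \cZ(M^{\om,\vphi})$, apply Step 2 inside $p$ and inside $1-p$: since $\vphi^\om$ is a trace, two projections of equal trace in the diffuse von Neumann algebra $M^{\om,\vphi}$ — if it had a center — could be compared, and a standard exhaustion/back-and-forth argument using Step 2 on both sides of a putative nontrivial central projection, combined once more with transitivity to match the approximating sequences, yields a partial isometry in $M^{\om,\vphi}$ between a subprojection of $p$ and a subprojection of $1-p$, forcing $p = 0$ or $p = 1$.

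The main obstacle, and the place where I expect to spend the real effort, is \emph{Step 2}: getting the halving (and more generally the comparison) projections in $M$ to asymptotically commute with $\vphi$, not merely to approximately halve the mass. The naive construction (pick any projection of the right $\vphi$-mass in $e_nMe_n$) need not asymptotically commute with $\vphi$, so one genuinely needs the quantitative Connes--St\o rmer transitivity theorem \cite[Theorem 4]{CS76}: it lets one start from \emph{any} projection and conjugate it by a unitary $u_n$ whose modular displacement $\|u_n \vphi - \vphi u_n\|$ is bounded in terms of how far the two associated states are in norm, which here tends to $0$. Packaging this into a statement about $M^{\om,\vphi}$ — i.e.\ ``every projection in $M^{\om,\vphi}$ of trace $t$ dominates one of trace $t/2$, and any two of equal trace are MvN-equivalent in $M^{\om,\vphi}$'' — is precisely the content that makes $M^{\om,\vphi}$ a type II$_1$ factor, and is the heart of \cite[Theorem 4.20, Proposition 4.24]{AH12}; the proof here is the same argument rephrased for the asymptotic centralizer rather than the Ocneanu ultrapower.
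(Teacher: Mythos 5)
Your overall architecture is the same as the paper's: use Connes--St{\o}rmer transitivity to (a) show that projections in $M^{\om,\vphi}$ of equal $\vphi^\om$-trace are equivalent and (b) produce almost $\vphi$-central projections of prescribed trace, then conclude factoriality and type II$_1$. But the mechanism you invoke at the crucial almost-centrality step is not correct as stated, and this is a genuine gap rather than a cosmetic one. Transitivity gives a unitary $u$ with $\|u\psi u^* - \vphi\|$ small; it gives no control whatsoever on the ``modular displacement'' $\|u\vphi - \vphi u\| = \|u\vphi u^* - \vphi\|$, and in your halving construction this quantity is genuinely \emph{not} small: there $\psi = g\vphi g + (e-g)\vphi(e-g)$ is far from $\vphi$ in norm, so $u\vphi u^*$ stays at distance roughly $\|\psi-\vphi\|$ from $\vphi$. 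The same problem undermines your justification of the comparison step (``transitivity produces unitaries conjugating $p_n$ towards $q_n$ while controlling $\|u_n\vphi - \vphi u_n\|$''): applying transitivity to the two block-diagonal states associated with $p_n$ and $q_n$ gives a unitary conjugating one state close to the other, but it neither sends $p_n$ close to $q_n$ nor almost commutes with $\vphi$, so you do not yet get a partial isometry representing an element of $M^{\om,\vphi}$.

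The repair is exactly the paper's device, and it never requires the conjugating unitary to be almost $\vphi$-central. For the prescribed-trace projections: $u g u^*$ lies in the centralizer of $u\psi u^*$, which \emph{is} norm-close to $\vphi$, whence $\|u g u^* \, \vphi - \vphi\, u g u^*\| \leq 2 \|u\psi u^* - \vphi\|$; the paper applies this globally (take $p \in M$ with $\vphi(p)=s$, $\psi = p\vphi p + (1-p)\vphi(1-p)$, conjugate $\psi$ close to $\vphi$), which directly yields projections of every trace $s \in [0,1]$ in $M^{\om,\vphi}$ and lets you skip the halving-plus-exhaustion of your Steps 2--3. For the equivalence of equal-trace projections $p = (p_n)$, $q = (q_n)$ with $\vphi(p_n)=\vphi(q_n)=\lambda$: first use that $M$ is type III to pick partial isometries $v_n$ with $v_n v_n^* = p_n$, $v_n^* v_n = q_n$, then apply transitivity \emph{inside the corner} $p_n M p_n$ (a III$_1$ factor) to the two functionals $p_n \vphi p_n$ and $v_n \vphi v_n^*$, obtaining $u_n \in \cU(p_n M p_n)$ with $\|u_n v_n \vphi v_n^* u_n^* - p_n \vphi p_n\| \to 0$. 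The product $w_n = u_n v_n$ satisfies $w_n w_n^* = p_n$, $w_n^* w_n = q_n$ and is asymptotically $\vphi$-central because $w_n \vphi w_n^* \to p_n \vphi p_n$ in norm while $p_n, q_n$ asymptotically commute with $\vphi$; so $w = (w_n)$ is the required partial isometry in $M^{\om,\vphi}$. With these two corrected ingredients your deduction of factoriality (comparing equal-trace subprojections under a putative central projection and its complement) goes through, and is essentially the short argument the paper leaves implicit.
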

\begin{proof}
Let $p,q \in M^{\om,\vphi}$ be nonzero projections with $\vphi^\om(p) = \lambda = \vphi^\om(q)$. We prove that $p$ and $q$ are equivalent projections in $M^{\om,\vphi}$. We can represent $p$ and $q$ by sequences of projections $p_n,q_n \in M$ such that $\vphi(p_n) = \lambda = \vphi(q_n)$ for all $n \in \N$ and such that $\lim_{n \to \om} \|p_n \vphi - \vphi p_n\| = 0 = \lim_{n \to \om} \|q_n \vphi - \vphi q_n\|$. Since $M$ is a type III factor, we can choose $v_n \in M$ such that $v_n v_n^* = p_n$ and $v_n^* v_n = q_n$ for all $n$.

Write $\vphi_n = p_n \vphi p_n$ and $\psi_n = v_n \vphi v_n^*$. Viewing $\lambda^{-1} \vphi_n$ and $\lambda^{-1} \psi_n$ as faithful normal states on the type III$_1$ factor $p_n M p_n$, it follows from Connes and St{\o}rmer's \cite[Theorem 4]{CS76} that we can choose $u_n \in \cU(p_n M p_n)$ such that $\|u_n \psi_n u_n^* - \vphi_n\| \to 0$. Writing $w_n = u_n v_n$, the sequence $(w_n)_{n \in \N}$ defines an element $w \in M^{\om,\vphi}$ satisfying $ww^* = p$ and $w^* w = q$.

Let $s \in [0,1]$. We next prove that there exists a projection $P \in M^{\om,\vphi}$ such that $\vphi^\om(P) = s$. In combination with the fact that $\vphi^\om$ is a faithful normal tracial state and the fact that projections having the same trace are equivalent, it then follows that $M^{\om,\vphi}$ is a factor of type II$_1$. Choose a projection $p \in M$ with $\vphi(p) = s$. Write $q = 1-p$ and define $\psi \in \St(M)$ by $\psi = p\vphi p + q \vphi q$. By Connes and St{\o}rmer's \cite[Theorem 4]{CS76}, we can choose $u_n \in \cU(M)$ such that $\|u_n \psi u_n^* - \vphi\| \to 0$. Since $p \in M_\psi$, it follows that $P = (u_n p u_n^*)_{n \in \N}$ defines a projection in $M^{\om,\vphi}$ with $\vphi^\om(P) = \psi(p) = \vphi(p) = s$.
\end{proof}

For every von Neumann algebra $M$ and every $\vphi \in \St(M)$, we denote by $E_\vphi$ the unique $\vphi$-preserving conditional expectation from $M$ onto $M_\vphi$.

\begin{lemma} \label{lem.semicontinuous}
Let $M$ be a von Neumann algebra. Take $x \in M$. Then the map
$$ \St(M) \ni \vphi \mapsto \| E_\vphi(x)\|_\vphi \in \R_+ $$ is upper semicontinuous.
\end{lemma}
\begin{proof}
Fix $x \in M$. Since $\|E_\vphi(x)\|_\vphi^2 = \|x\|_\vphi^2 - \|x - E_\vphi(x)\|_\vphi^2$ and since $\vphi \mapsto \|x\|_\vphi$ is continuous, it suffices to prove that $\vphi \mapsto \|x - E_\vphi(x)\|_\vphi$ is lower semicontinuous. Assume the contrary. Take $\delta \geq 0$ and a sequence $(\vphi_n)_{n \in \N}$ in $\St(M)$ that converges to $\vphi \in \St(M)$ such that
$$\|x - E_{\vphi_n}(x)\|_{\vphi_n} \leq \delta < \|x - E_\vphi(x)\|_\vphi \quad\text{for all $n \in \N$.}$$
After passage to a subsequence, we may assume that $E_{\vphi_n}(x) \to y \in M$ weakly$^*$. Since for every $a \in M$, we have
$$\vphi(a y) = \lim_n \vphi(a E_{\vphi_n}(x)) = \lim_n \vphi_n(a E_{\vphi_n}(x)) = \lim_n \vphi_n(E_{\vphi_n}(x)a) = \lim_n \vphi(E_{\vphi_n}(x)a) = \vphi(ya) \; ,$$
we get that $y \in M_\vphi$. Since $x - E_{\vphi_n}(x) \to x-y$ weakly$^*$ and since $y \in M_\vphi$, we arrive at the contradiction
\begin{equation*}
\|x-y\|_\vphi \leq \limsup_n \|x - E_{\vphi_n}(x)\|_\vphi \leq \delta < \|x - E_\vphi(x)\|_\vphi \leq \|x - y \|_\vphi \; .\qedhere
\end{equation*}
\end{proof}

The next lemma is the key step in proving Theorem~\ref{thmA}.

\begin{lemma}\label{lem.open-dense}
Let $M$ be a type III$_1$ factor with separable predual. Take $x \in M$ and $\varepsilon > 0$. Then
$$U(x,\varepsilon) = \bigl\{ \vphi \in \St(M) \bigm|  \| E_\vphi(x)\|_\vphi < |\vphi(x) | + \varepsilon \bigr\}$$
is a dense open subset of $\St(M)$.
\end{lemma}
\begin{proof}
By Lemma \ref{lem.semicontinuous}, the map
$$\vphi \mapsto \| E_\vphi(x) \|_\vphi -|\vphi(x)| $$
is upper semicontinuous, so that $U(x,\eps)$ is open. We fix an arbitrary $\eps > 0$ and prove that $U(x,7\eps)$ is dense in $\St(M)$.

Fix $\psi \in \St(M)$ and $\delta_0 > 0$. We construct a $\vphi \in U(x,7\eps)$ with $\|\psi - \vphi\| < \delta_0$. By Lemma \ref{lem.semicontinuous}, we find $\delta > 0$ such that $\| E_\vphi(x-E_\psi(x)) \|_\vphi < \varepsilon $ for every $\vphi \in \St(M)$ satisfying $\| \vphi-\psi \| \leq \delta$. We may assume that $\delta \leq 1$, $\delta \|x\| \leq \varepsilon$ and $\delta < \delta_0$.

Write $y=E_\psi(x)-\psi(x)1$. By Lemma \ref{lem.asympt-centr-factor}, $N=M^{\omega,\psi}$ is a II$_1$ factor with trace $\psi^\om$. Since $y \in N$ and $\psi^\om(y)=0$, Lemma \ref{lem.local-quantization} provides a finite dimensional abelian subalgebra $A \subset N$ such that $\| E_{A' \cap N}(y)\|_2 \leq \varepsilon $.

Denote by $P_1,\ldots,P_n$ the minimal projections in $A$, so that $E_{A' \cap N}(y) = \sum_{i=1}^n P_i y P_i$. We represent $P_i = (p_{i,k})_{k \in \N}$, where for each $k \in \N$, $(p_{1,k},\ldots,p_{n,k})$ is a partition of unity in $M$. Taking $k$ large enough, we thus find a partition of unity $(p_1,\dots,p_n)$ in $M$ such that
$$\bigl\| \sum_{i=1}^n p_i y p_i \bigr\|_\psi \leq 2\varepsilon \quad\text{and}\quad \bigl\| \psi- \sum_{i=1}^n p_i \psi p_i \bigr\| \leq \delta/2 \; .$$
Let $\phi =  \sum_{i=1}^n p_i \psi p_i$ and $B=\oplus_{i=1}^n \C p_i \subset M_\phi$. Since $M$ has separable predual, the point spectrum $\Lambda$ of $\phi$ is countable. Therefore, we can find a tuple of positive real numbers $(\lambda_i)_{1 \leq i \leq n}$ such that
\begin{itemlist}
\item $\sum_{i=1}^n \lambda_i \phi(p_i)=1$,
\item $|\lambda_i-1| \leq \delta/2$ for all $i$,
\item $\lambda_i \lambda_j^{-1} \not\in \Lambda$ for every $i \neq j$.
\end{itemlist}
Then the state $\vphi = \sum_{i=1}^n \lambda_i p_i \psi p_i$ will satisfy $\vphi \leq (1+\delta/2)\phi \leq  2 \phi$ and $M_\vphi \subset B' \cap M$. Therefore
$$ \| E_{\vphi}(y)  \|_\vphi \leq \| E_{B' \cap M}(y)\|_\vphi= \bigl\| \sum_{i=1}^n p_i y p_i \bigr\|_\vphi \leq 2 \bigl\| \sum_{i=1}^n p_i y p_i \bigr\|_\phi = 2 \bigl\| \sum_{i=1}^n p_i y p_i \bigr\|_\psi \leq  4\varepsilon \; .$$
Since $y=E_\psi(x)-\psi(x)1$, we thus get
$$ \| E_\vphi(E_\psi(x)) \|_\vphi \leq | \psi(x)|+ \| E_\vphi(E_\psi(x)) - \psi(x)1 \| =| \psi(x)|+\| E_\vphi(y) \|_\vphi \leq |\psi(x)| + 4 \varepsilon \; .$$

Note that $\| \vphi - \psi \| \leq \| \vphi - \phi\| + \| \phi-\psi\| \leq \delta$.  Therefore, by our choice of $\delta$, we have
$$\| E_\vphi(x)-E_\vphi(E_\psi(x)) \|_\vphi \leq \varepsilon \; .$$
Moreover, we have
$$| \psi(x)| \leq | \vphi(x)| + \delta \|x\| \leq |\vphi(x)|+ \varepsilon \; .$$
We conclude that
$$ \| E_\vphi(x) \|_\vphi \leq | \vphi(x)|+6\varepsilon \; .$$
So, $\vphi \in U(x,7\eps)$ and $\|\psi-\vphi\| \leq \delta < \delta_0$.
\end{proof}

We will deduce Theorem~\ref{thmA} from Lemma \ref{lem.open-dense} and the Baire category theorem. Since $\St(M)$ is not closed in the Polish space $\St_1(M)$, we also need the following lemma.

\begin{lemma}\label{lem.faithful-is-dense-Gdelta}
Let $M$ be a von Neumann algebra with separable predual. Then $\St(M) \subset \St_1(M)$ is a dense $G_\delta$ set.
\end{lemma}
\begin{proof}
Choose a faithful normal state $\vphi$ on $M$. For every $\psi \in \St_1(M)$, denote by $\supp \psi$ its support projection, which is the smallest projection $p \in M$ such that $\psi(1-p) = 0$. For every $\eps > 0$, define $V(\eps) \subset \St_1(M)$ by
$$V(\eps) = \{\psi \in \St_1(M) \mid \vphi(1-\supp \psi) < \eps \} \; .$$
We claim that each $V(\eps)$ is an open dense subset of $\St_1(M)$ and that $\bigcap_{k=1}^\infty V(1/k) = \St(M)$, which then concludes the proof of the lemma.

To prove that $V(\eps)$ is open, let $\psi_n \in \St_1(M) \setminus V(\eps)$ be a sequence that converges to $\psi \in \St_1(M)$. We have to prove that $\psi \not\in V(\eps)$. Denote $p_n = 1 - \supp \psi_n$ and $p = 1 - \supp \psi$. We have to prove that $\vphi(p) \geq \eps$.

Choose a subsequence $(n_k)_k$ such that $p_{n_k} \to a \in M$ weakly$^*$. Note that $0 \leq a \leq 1$. We get that
$$\psi(a) = \lim_k \psi(p_{n_k}) = \lim_k \psi_{n_k}(p_{n_k}) = 0 \; .$$
This means that $a = p a p$. Since $0 \leq a \leq 1$, it follows that $a \leq p$, so that $\vphi(p) \geq \vphi(a)$. Since $p_{n_k} \to a$ weakly$^*$, also $\vphi(p_{n_k}) \to \vphi(a)$. Since $\vphi(p_{n_k}) \geq \eps$ for all $k$, we conclude that $\vphi(a) \geq \eps$. Hence, $\vphi(p) \geq \eps$ and we have proven that $V(\eps)$ is an open subset of $\St_1(M)$.

By definition, $\bigcap_{k=1}^\infty V(1/k) = \St(M)$. Since $\St(M)$ is dense in $\St_1(M)$, in particular all $V(\eps)$ are dense. So the lemma is proven.
\end{proof}

We are now ready to prove Theorem~\ref{thmA}.

\begin{proof}[{Proof of Theorem \ref{thmA}}]
The implication $3 \Rightarrow 2$ is trivial and the implication $2 \Rightarrow 1$ follows from Lemma \ref{lem.easy-implication}. Now assume that $1$ holds.

Observe that for every $x \in M$ and every $\vphi \in \St(M)$, we have
$$ \| E_\vphi(x) \|_\vphi^2=\| E_\vphi(x)-\vphi(x)1 \|_\vphi^2+|\vphi(x)|^2$$
hence $E_\vphi(x)=\vphi(x)1$ if and only if $\| E_\vphi(x)\|_\vphi \leq |\vphi(x)|$. From this, we see that
$$\Serg(M) = \bigl\{ \vphi \in \St(M) \bigm| \forall x \in M, \;  E_\vphi(x)=\vphi(x)1 \bigr\} = \bigcap_{x \in M, \: \varepsilon > 0} U(x,\varepsilon)$$
where the sets $U(x,\varepsilon)$ are the dense open subsets of $\St(M)$ given by Lemma \ref{lem.open-dense}.

To write this as a countable intersection, we pick $(x_n)_{n \in \N}$ a strongly dense sequence of elements in the unit ball of $M$ and a sequence $(\varepsilon_n)_{n \in \N}$ of positive numbers that converges to $0$. Then we have
$$\Serg(M) = \bigcap_{n,k \in \N} U(x_n, \varepsilon_k) \; .$$
In combination with Lemma \ref{lem.faithful-is-dense-Gdelta} and the Baire category theorem, we conclude that $\Serg(M)$ is a dense $G_\delta$ subset of $\St_1(M)$.
\end{proof}

\begin{remark}\label{rem.not-separable}
We note that Theorem~\ref{thmA} is false without the separability assumption on the predual $M_*$. When $M$ is any type III$_1$ factor with separable predual, $\vphi \in \St(M)$ and $\om$ is a free ultrafilter on $\N$, it follows from \cite[Theorem 4.20]{AH12} that the Ocneanu ultrapower $M^\om$ is a factor of type III$_1$ with the property that all its faithful normal states are unitarily conjugate. For the ultrapower state $\vphi^\om$, the centralizer is diffuse by Lemma \ref{lem.asympt-centr-factor}. Since all other faithful normal states on $M^\om$ are unitarily conjugate to $\vphi^\om$, they all have a diffuse centralizer.
\end{remark}

\section{\boldmath Ergodic actions on type II$_1$ factors}\label{sec.ergodic-actions}

We will need the following standard averaging lemma.

\begin{lemma}[{See e.g.\ \cite[Lemma 5.1]{AHHM18}}] \label{lem.average}
Let $M$ be a von Neumann algebra and $\vphi \in \St(M)$ a faithful normal state. Let $G \subset \Aut(M,\vphi)$ be a group of $\vphi$-preserving automorphisms. Denote by $E: M \to M^{G}$ the unique $\vphi$-preserving conditional expectation on the fixed point algebra $M^G$. Whenever $x \in M$ and $x \not\in M^G$, there exists an $\alpha \in G$ such that $\|x-\alpha(x)\|_\vphi > \|x - E(x)\|_\vphi$.
\end{lemma}

\begin{proof}[{Proof of Theorem \ref{thmB}}]
For every nonzero element $x \in M$ with $\tau(x) = 0$, write
$$\cU_x = \bigl\{ v \in \cC(\al) \bigm| \; \exists h \in \Gamma : \|v_h \al_h(x) v_h^* - x\|_2 > \|x\|_2 \;\bigr\} \; .$$
We first prove that $\cU_x$ is an open dense subset of $\cC(\al)$. It is clear that $\cU_x$ is open. To prove that every $w \in \cC(\al)$ belongs to the closure of $\cU_x$, we replace $\al$ by $\Ad w \circ \al$ and it suffices to prove that $1$ lies in the closure of $\cU_x$.

Fix $\eps > 0$ and a finite subset $\cF \subset \Gamma$. We construct $v \in \cU_x$ such that $\|v_g - 1\|_2 < \eps$ for all $g \in \cF$. We will find $v$ as an inner $1$-cocycle $v_h = u^* \al_h(u)$ for all $h \in \Gamma$. It thus suffices to prove that we can find a unitary $u \in \cU(M)$ and an element $h \in \Gamma$ such that $\|\al_g(u) - u\|_2 < \eps$ for all $g \in \cF$ and $\|\al_h(u x u^*) - u x u^*\|_2 > \|x\|_2$.

Fix a free ultrafilter $\om$ on $\N$. Denote by $\al^\om$ the natural action of $\Gamma$ on $M^\om$. By \cite[Lemma 4.2]{PSV18}, we can take a unitary $V \in M^\om$ such that the subfactors $M$ and $(\al^\om_h(V M V^*))_{h \in \Gamma}$ are all free inside $M^\om$. Denote by $P \subset M^\om$ the subfactor generated by $(\al^\om_h(V M V^*))_{h \in \Gamma}$. By \cite[Lemma 4.3]{PSV18}, the restriction of $\al^\om$ to $P$ is not strongly ergodic, and its ultrapower actually has a diffuse fixed point subalgebra. We can thus choose a projection $p \in P$ such that $\tau(p) = 1/2$ and $\|\al^\om_g(p) - p\|_2 < \eps / 6$ for all $g \in \cF$.

For every finite subset $J \subset \Gamma$, we denote by $P_J \subset P$ the subfactor generated by $\al^\om_h(V M V^*)$, $h \in J$. Since the union of all $P_J$ is dense in $P$ and every $P_J$ is a II$_1$ factor, we can pick a finite subset $J \subset \Gamma$ and a projection $q \in P_J$ such that $\tau(q) = 1/2$ and $\|p-q\|_2 < \eps/6$. Since $\|\al^\om_g(p) - p\|_2 < \eps / 6$ for all $g \in \cF$, it follows that $\|\al^\om_g(q) - q\|_2 < \eps/2$ for all $g \in \cF$. Defining $U = 2q-1$, we have found a unitary $U \in P_J$ with $\tau(U)=0$ and $\|\al^\om_g(U) - U\|_2 < \eps$ for all $g \in \cF$.

Since $\Gamma$ is infinite, we can take $h \in \Gamma$ such that $h J \cap J = \emptyset$. Note that $\al^\om_h(U) \in P_{hJ}$. Since $x \in M$ and $U \in P_J$ have trace zero and $M$, $P_J$, $P_{hJ}$ are free, we have
$$\| \al^\om_h(U x U^*) - U x U^*\|_2^2 = 2 \bigl( \|x\|_2^2 - \Re \tau\bigl(U x^* U^* \al^\om_h(U) \al_h(x) \al^\om_h(U^*)\bigr)\bigr) = 2 \|x\|_2^2 > \|x\|_2^2 \; .$$
Representing $U = (u_n)_{n \in \N}$ with $u_n \in \cU(M)$, there exists an $n \in \N$ such that
$$\|\al_g(u_n) - u_n\|_2 < \eps \quad\text{for all $g \in \cF$, and}\quad \| \al_h(u_n x u_n^*) - u_n x u_n^*\|_2 > \|x\|_2 \; .$$
Putting $u = u_n$, the density of $\cU_x$ is proven.

Let $(x_n)_{n \in \N}$ be a countable $\|\cdot\|_2$-dense subset of $\{x \in M \mid x \neq 0 , \tau(x) = 0\}$. To conclude the proof of the theorem, it suffices to show that for $v \in \cC(\al)$, the action $\Ad v \circ \al$ is ergodic if and only if $v \in \bigcap_{n \in \N} \cU_{x_n}$.

If $\Ad v \circ \al$ is ergodic, then by Lemma \ref{lem.average}, $v \in \cU_x$ for every nonzero $x \in M$ with $\tau(x) = 0$. So, $v \in \bigcap_{n \in \N} \cU_{x_n}$. If $\be = \Ad v \circ \al$ is not ergodic, we choose a nonzero element $x \in M$ with $\tau(x) = 0$ and $\be_h(x) = x$ for all $h \in \Gamma$. Write $\eps = \|x\|_2 / 3$. Take $n \in \N$ such that $\|x - x_n\|_2 \leq \eps$. Then $\|x_n\|_2 \geq \|x\|_2 - \eps = 2 \eps$ and
$$\|\be_h(x_n) - x_n\|_2 \leq 2 \eps \leq \|x_n\|_2 \quad\text{for all $h \in \Gamma$.}$$
This says that $v \not\in \cU_{x_n}$. Thus, $v \not\in \bigcap_{n \in \N} \cU_{x_n}$.
\end{proof}

\begin{theorem}\label{thm.ergodic-action-Z}
Let $M$ be a II$_1$ factor with separable predual and $\al \in \Aut(M)$. Then the following statements are equivalent.
\begin{enumlist}
\item $\al \in \Out(M) = \Aut(M) / \Inn(M)$ has infinite order.
\item There exists a unitary $u \in \cU(M)$ such that $\Ad u \circ \al$ is ergodic.
\item The set $\{u \in \cU(M) \mid \Ad u \circ \al \;\;\text{is ergodic}\;\}$ is a dense $G_\delta$ subset of $\cU(M)$.
\end{enumlist}
\end{theorem}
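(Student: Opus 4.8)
The plan is to prove the cycle of implications $(1)\Rightarrow(3)\Rightarrow(2)\Rightarrow(1)$. The implication $(3)\Rightarrow(2)$ is immediate, since $\cU(M)$ with the $\|\cdot\|_2$-topology is a (nonempty) Polish group and a dense $G_\delta$ subset of a nonempty Polish space is nonempty. The main point is to recast $(1)\Rightarrow(3)$ as a direct instance of Theorem~\ref{thmB}, and to settle $(2)\Rightarrow(1)$ by an averaging argument in which the II$_1$ factor hypothesis genuinely enters.

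For $(1)\Rightarrow(3)$, I would view $\al$ as the generator of the action $\al^\Z$ of $\Gamma=\Z$ on $M$ defined by $(\al^\Z)_n=\al^n$. Since $\Z$ is countably infinite and amenable and $M$ is a II$_1$ factor with separable predual, Theorem~\ref{thmB} applies as soon as $\al^\Z$ is outer; and outerness of $\al^\Z$ means exactly that $\al^n\notin\Inn(M)$ for every $n\neq 0$, i.e.\ that $[\al]$ has infinite order in $\Out(M)$, which is hypothesis $(1)$. A $1$-cocycle $v\in\cC(\al^\Z)$ is completely determined by $u:=v_1$ through $v_0=1$, $v_n=u\,\al(u)\cdots\al^{n-1}(u)$ for $n\geq 1$ and $v_{-n}=\al^{-n}(v_n^*)$, and conversely every $u\in\cU(M)$ arises this way; the resulting bijection $\cC(\al^\Z)\to\cU(M)$, $v\mapsto v_1$, is a homeomorphism, since each coordinate map $v\mapsto v_n$ is a finite product of the $\|\cdot\|_2$-continuous maps $u\mapsto\al^j(u)$ and the inverse is a coordinate projection. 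An easy induction shows $(\Ad u\circ\al)^n=(\Ad v_n)\circ\al^n=(\Ad v\circ\al^\Z)_n$ for all $n\in\Z$, so the fixed-point algebra of the $\Z$-action $\Ad v\circ\al^\Z$ equals the fixed-point algebra $M^{\Ad u\circ\al}$ of the automorphism $\Ad u\circ\al$; in particular $\Ad v\circ\al^\Z$ is ergodic iff $\Ad u\circ\al$ is ergodic. Hence Theorem~\ref{thmB} transports to the statement that $\{u\in\cU(M)\mid\Ad u\circ\al\text{ is ergodic}\}$ is a dense $G_\delta$ subset of $\cU(M)$, which is $(3)$.

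For $(2)\Rightarrow(1)$, I would argue by contraposition: suppose $[\al]$ has finite order in $\Out(M)$. For any $u\in\cU(M)$, $\Ad u$ is inner, so $[\Ad u\circ\al]=[\al]$ in $\Out(M)$; it therefore suffices to prove that any $\be\in\Aut(M)$ with $\be^n=\Ad w$ for some $n\geq 1$ and $w\in\cU(M)$ is not ergodic. Put $N=M^{\be^n}=\{w\}'\cap M$. As $\be$ commutes with $\be^n$, $N$ is $\be$-invariant, and $\delta:=\be|_N$ satisfies $\delta^n=\id_N$ since $w$, lying in $N$ and commuting with all of $N$, acts trivially by $\Ad$ on $N$; moreover the trace $\tau$ of $M$ restricts to a $\delta$-invariant faithful normal trace on $N$. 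First, $N\neq\C 1$, for otherwise $w\in\{w\}'\cap M=\C 1$, hence $\be^n=\id$ and $N=M$, forcing $M=\C 1$. Second, $N$ is diffuse: if $w\in\C 1$ then $N=M$, and otherwise $N$ has no minimal projections, because a minimal projection $p\in N$ commutes with $w$, so that $pw$ is a unitary in $pMp$ with $\{pw\}'\cap pMp=pNp=\C p$; this forces $pw\in\C p$ and hence $pMp=\{pw\}'\cap pMp=\C p$, contradicting that $pMp$ is a II$_1$ factor. Now assume $\be$ is ergodic, so $N^\delta=M^\be=\C 1$. Choose a projection $q\in N$ with $0<\tau(q)<1/n$ (possible as $N$ is diffuse); then $\bigvee_{j=0}^{n-1}\delta^j(q)$ is $\delta$-invariant and nonzero, hence equals $1$, while its trace is at most $n\,\tau(q)<1$, a contradiction. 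So $\be$ is not ergodic.

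I expect the most delicate bookkeeping to be in $(1)\Rightarrow(3)$ --- checking that the cocycle/unitary dictionary is a homeomorphism that matches the two notions of ergodicity --- while the one genuinely new ingredient is the diffuseness of $N=\{w\}'\cap M$ in $(2)\Rightarrow(1)$: this is exactly where one uses that $M$ is a II$_1$ factor rather than a general finite von Neumann algebra (a cyclic shift on $\C^n$ shows the final averaging step fails without diffuseness).
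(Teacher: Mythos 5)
Your proposal is correct and follows essentially the same route as the paper: $1\Rightarrow 3$ by specializing Theorem~\ref{thmB} to the $\Z$-action generated by $\al$ (with the cocycle--unitary dictionary $v\mapsto v_1$ that the paper leaves implicit), $3\Rightarrow 2$ trivially, and $2\Rightarrow 1$ by contraposition using the diffuse relative commutant $\{w\}'\cap M$ of the unitary implementing $\be^n$ and the non-ergodicity of the resulting $\Z/n\Z$-action. The only cosmetic differences are that you prove the invariance of $N$ via commutation with $\be^n$ where the paper notes $\al(u)\in\T u$, and that you spell out the diffuseness and small-projection averaging arguments that the paper cites as standard.
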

\begin{proof}
$1 \Rightarrow 3$ follows from Theorem~\ref{thmB} and $3 \Rightarrow 2$ is trivial. To conclude the proof, we assume that $\al^n$ is inner for some $n \in \N$, $n \geq 1$, and prove that no $\Ad v \circ \al$ is ergodic. Replacing $\al$ by $\Ad v \circ \al$, we still have that $\al^n$ is inner and we have to prove that $\al$ is not ergodic. Write $\al^n = \Ad u$ for some $u \in \cU(M)$. Define $P = M \cap \{u\}'$ and note that $P$ is diffuse. Also note that
$$\Ad u = \al^n = \al \circ \al^n \circ \al^{-1} = \al \circ \Ad u \circ \al^{-1} = \Ad \al(u) \; .$$
Thus, $\al(u)$ is a multiple of $u$, so that $\al(P) = P$. Since $\al^n$ is the identity on $P$, $\al$ defines an action of the finite group $\Z/n\Z$ on the diffuse von Neumann algebra $P$. This action is not ergodic, so that also $\al$ is not ergodic.
\end{proof}

\begin{definition}
We denote by $\Erg$ the class of countable groups $\Gamma$ with the property that every outer action of $\Gamma$ on a II$_1$ factor with separable predual admits a cocycle perturbation that is ergodic.
\end{definition}

By Theorem~\ref{thmB}, the class $\Erg$ contains all infinite amenable groups. The class $\Erg$ also has the following stability property, similar to \cite[Corollary 2.3]{Pop18} for the vanishing $2$-cohomology property. In particular, the free groups belong to $\Erg$.

\begin{proposition}\label{prop.free-product}
If $G = \Gamma \ast_T \Lambda$ is any amalgamated free product with $T$ finite, $\Lambda$ any countable group and $\Gamma \in \Erg$, then $G \in \Erg$.
\end{proposition}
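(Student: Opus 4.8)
The plan is to use the subgroup $\Gamma$ as a lever: restrict the given $G$-action to $\Gamma$, use the hypothesis $\Gamma \in \Erg$ to make it ergodic there by a cocycle perturbation, and then lift that cocycle back up to $G$.

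So let $\al : G \actson M$ be an outer action on a II$_1$ factor $M$ with separable predual, and regard $T \leq \Gamma$ and $T \leq \Lambda$ as subgroups of $G$. Since $\al$ is outer on $G$, the restrictions $\al|_\Gamma$, $\al|_\Lambda$ and $\al|_T$ are outer as well. As $\Gamma \in \Erg$, we may choose $w \in \cC(\al|_\Gamma)$ with $\be := \Ad w \circ \al|_\Gamma$ ergodic. If I can produce $v \in \cC(\al)$ with $v|_\Gamma = w$, I am done: then $(\Ad v \circ \al)|_\Gamma = \be$, so the fixed-point algebra of $\Ad v \circ \al$ is contained in that of $\be$, namely $\C 1$, whence $\Ad v \circ \al$ is ergodic and $G \in \Erg$.

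Thus the real content is the extension problem for $w$. I would phrase it inside the crossed product $M \rtimes_\al G$: writing $(u_g)$ for the canonical unitaries, the assignment $v \mapsto (g \mapsto v_g u_g)$ identifies $\cC(\al)$ with the group homomorphisms $G \to \cU(M \rtimes_\al G)$ that send each $g$ into $\cU(M) u_g$, and similarly for $\Gamma$, $\Lambda$, $T$. By the universal property of $G = \Gamma \ast_T \Lambda$, extending $w$ reduces to finding $w' \in \cC(\al|_\Lambda)$ with $w'|_T = w|_T$: the homomorphisms attached to $w$ on $\Gamma$ and to $w'$ on $\Lambda$ then agree on $T$, hence glue to a homomorphism on $G$, which again sends each $g$ into $\cU(M) u_g$ (checked by composing with $a u_g \mapsto g$), so it comes from a genuine $v \in \cC(\al)$ with $v|_\Gamma = w$. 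In particular it is enough to show that $w|_T$, a $1$-cocycle for the \emph{outer} action $\al|_T$ of the \emph{finite} group $T$, is a coboundary: once $w_t = z^* \al_t(z)$ for all $t \in T$ and some $z \in \cU(M)$, the inner cocycle $h \mapsto z^* \al_h(z)$ on $\Lambda$ restricts to $w|_T$ on $T$ and can serve as $w'$.

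I expect this cohomological vanishing to be the main point (it is classical; here is the argument I have in mind). Put $N = M \rtimes_{\al|_T} T$, which is a II$_1$ factor since $\al|_T$ is outer, with canonical unitaries $(u_t)_{t \in T}$ and trace $\tau$. Set $p = \frac{1}{|T|}\sum_t u_t$ (the Jones projection of $M^{\al|_T} \subset M$) and $q = \frac{1}{|T|}\sum_t w_t u_t$; the cocycle identity $w_{st} = w_s \al_s(w_t)$ gives $q = q^* = q^2$, and $\tau(p) = \tau(q) = 1/|T|$. As $N$ is a II$_1$ factor, choose a partial isometry $W \in N$ with $W^* W = p$ and $W W^* = q$. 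From $W = Wp$ one sees $W = m' p$ for a single $m' \in M$; then $W^* W = p$ forces $E_{M^{\al|_T}}((m')^* m') = 1$, while $W W^* = q$ forces $m' \al_t((m')^*) = w_t$ for every $t$, hence $m' (m')^* = 1$ at $t = e$. A coisometry in a finite factor is a unitary, so $m' \in \cU(M)$, and then $w_t = m' \al_t(m')^* = z^* \al_t(z)$ with $z = (m')^*$. This settles the extension step and finishes the proof; apart from this lemma, the only care needed is the universal-property bookkeeping, which is routine.
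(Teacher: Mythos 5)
Your proof is correct and follows essentially the same route as the paper: restrict the action to $\Gamma$, use $\Gamma \in \Erg$ to get an ergodic cocycle perturbation there, note that the restriction of this cocycle to the finite group $T$ is a coboundary for the outer action $\al|_T$, and extend across the amalgam $G = \Gamma \ast_T \Lambda$. The only (minor) differences are that you prove the vanishing of $T$-cocycles directly via equivalence of the two averaged projections in $M \rtimes_{\al|_T} T$, where the paper simply cites Jones, and that you extend by the inner cocycle $h \mapsto z^*\al_h(z)$ on $\Lambda$ instead of first replacing the cocycle by a cohomologous one that is trivial on $T$ and extending trivially; both are routine variations of the same argument.
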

\begin{proof}
Let $G \actson^\al M$ be an outer action on a II$_1$ factor $M$ with separable predual. Denote by $\be$ the restriction of $\al$ to the subgroup $\Gamma$. Since $\Gamma \in \Erg$, we can take $v \in \cC(\be)$ such that $\Ad v \circ \be$ is ergodic. By \cite[Theorem 3.1.3]{Jon79}, the restriction of $v$ to the finite group $T$ is a coboundary. We thus find a unitary $u \in \cU(M)$ such that $v = u^* \be_k(u)$ for all $k \in T$. Define $w \in \cC(\be)$ by $w_g = u v_g \be_g(u^*)$ for all $g \in \Gamma$. Then $\Ad w \circ \be$ is conjugate to $\Ad v \circ \be$ via the inner automorphism $\Ad u$. Therefore, also $\Ad w \circ \be$ is ergodic.

By construction, $w_k = 1$ for all $k \in T$. We thus find a unique $c \in \cC(\al)$ such that $c_g = 1$ for all $g \in \Lambda$ and $c_g = w_g$ for all $g \in \Gamma$. Since $\Ad w \circ \be$ is ergodic, a fortiori $\Ad c \circ \al$ is ergodic.
\end{proof}

On the other hand, as with the vanishing $2$-cohomology in \cite{Pop18}, there also are several families of groups that do not belong to $\Erg$. The result follows from a variant of Popa's cocycle superrigidity theorems \cite{Pop01,Pop06}.

\begin{proposition}\label{prop.no-go-thm}
If $\Gamma$ has property (T) or if $\Gamma = \Gamma_1 \times \Gamma_2$ is the direct product of a nonamenable group $\Gamma_1$ and an infinite group $\Gamma_2$, then $\Gamma$ admits an outer action $\al$ on the hyperfinite II$_1$ factor such that for no $v \in \cC(\al)$, the action $\Ad v \circ \al$ is ergodic. In particular, $\Gamma \not\in \Erg$.
\end{proposition}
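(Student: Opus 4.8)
The plan is to use Popa's cocycle superrigidity theorems to construct an action all of whose cocycle perturbations are non-ergodic. The key idea: if an action $\al$ is cocycle superrigid (every cocycle with values in a suitable target group is cohomologous to a homomorphism into that group, after unitary conjugacy), then every cocycle perturbation $\Ad v \circ \al$ is conjugate to one of the form $\Ad \pi \circ \al$ where $\pi : \Gamma \to \T$ is a character; but such an action is never ergodic when $\al$ itself is not ergodic, because $\Ad \pi_g = \id$, so $\Ad \pi \circ \al = \al$, and so it suffices to start from an action that is \emph{not} ergodic and remains so under all perturbations.

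\textbf{Step 1: choice of the model action.} Take the Bernoulli action $\Gamma \actson (X_0,\mu_0)^\Gamma$ on the base probability space $(X_0,\mu_0) = (\{0,1\},\text{uniform})$, giving an action $\al^0$ on the hyperfinite II$_1$ factor $R = L^\infty(X_0,\mu_0)^{\ot \Gamma}$, and then \emph{tensor} it with the trivial action on a copy of the hyperfinite II$_1$ factor: set $M = R \ovt R_0$ with $R_0$ hyperfinite II$_1$, and $\al = \al^0 \ot \id$. Since $\al^0$ is outer (indeed properly outer) and $\id$ acts on a factor, $\al$ is an outer action of $\Gamma$ on the hyperfinite II$_1$ factor $M$. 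But $\al$ is visibly \emph{not} ergodic: every element of $1 \ot R_0$ is fixed.

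\textbf{Step 2: cocycle superrigidity.} Under the hypotheses on $\Gamma$ — property (T), or $\Gamma = \Gamma_1 \times \Gamma_2$ with $\Gamma_1$ nonamenable and $\Gamma_2$ infinite — Popa's cocycle superrigidity theorem (\cite{Pop01} for property (T), \cite{Pop06} for the product case; the malleability and spectral gap of the Bernoulli action are what is used) applies to the Bernoulli action and gives: every $1$-cocycle $v \in \cC(\al)$ for $M = R \ovt R_0$ is cohomologous, via a unitary in $M$, to a cocycle taking values in the centralizer of the action, tensored with a character. More precisely, writing $v_g = u^* w_g \al_g(u)$, one arranges $w_g \in \T \cdot (1 \ovt R_0)$... [one has to be a little careful about the exact target group]. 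The cleanest route: apply superrigidity with target group $\cU(1 \ovt R_0) \cdot \T 1 = \cU(M^\al)$ and conclude that $v$ is cohomologous to a cocycle $w$ with $w_g \in \cU(M^{\al})$ for all $g$, i.e.\ $w_g \in 1 \ovt R_0$ up to scalars.

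\textbf{Step 3: conclusion.} Given $v \in \cC(\al)$, write $v_g = u^* w_g \al_g(u)$ with $w_g \in \cU(M^\al)$. Then $\Ad v \circ \al$ is conjugate, via $\Ad u$, to $\Ad w \circ \al$. But $w_g$ commutes with the range of $\al$ (it lies in $1 \ovt R_0$, on which $\al$ is trivial and which is fixed pointwise), so $(\Ad w \circ \al)_g = \Ad w_g \circ \al_g$ leaves $1 \ovt R_0$ globally invariant and in fact acts on it by $\Ad w_g$. Since $R_0$ is diffuse and $\Gamma$ is countable, the unitary representation $g \mapsto \Ad w_g$ on $R_0$ — more to the point, the element $1 \ovt x$ for $x$ in the relative commutant of $\{w_g\}$ in $R_0$, which is nonzero and diffuse since $\{w_g : g \in \Gamma\}$ generates at most a hyperfinite (hence amenable, non-prime) von Neumann subalgebra — is a nontrivial fixed point. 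Hence $\Ad w \circ \al$, and therefore $\Ad v \circ \al$, is not ergodic. So no cocycle perturbation of $\al$ is ergodic, and $\Gamma \notin \Erg$.

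\textbf{Main obstacle.} The delicate point is Step 2: pinning down the exact statement of cocycle superrigidity that is available for these $\Gamma$ with the target group being $\cU(M^\al)$ (equivalently $\T \cdot \cU(1 \ovt R_0)$) rather than just $\T$. Popa's theorems are usually stated for $\cU_{\text{fin}}$-valued or $\T$-valued cocycles; here one needs the version for cocycles into an arbitrary Polish group of finite type, or one must first argue (via a separate weak-mixing/untwisting argument on the Bernoulli factor) that any cocycle is cohomologous to one constant on the Bernoulli part. I would handle this by invoking the general cocycle superrigidity statement for malleable spectrally-gap actions with target group the unitary group of the centralizer, and if a clean citation is not available, by a direct deformation/rigidity argument on $R \ovt R_0$ treating $1 \ovt R_0$ as a ``constant'' part — the spectral gap of the Bernoulli action survives tensoring with the trivial action, so Popa's transversality/deformation machinery goes through verbatim. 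A secondary, purely bookkeeping obstacle is to make sure the model action is genuinely outer as an action on $M$ (not merely that each $\al_g$ is outer): this follows because $\al_g^0$ is properly outer on $R$ for $g \neq e$, so $\al_g = \al_g^0 \ot \id$ is properly outer on $R \ovt R_0$, hence $\al$ has trivial kernel in $\Out(M)$.
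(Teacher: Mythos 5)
The decisive problem is in Step 3, and it is not merely a citation issue: with your model $M=R_1\ovt R_0$, $\al=(\text{Bernoulli})\ot\id$, the desired conclusion is actually \emph{false} in the product case. A cocycle for $\al$ with values in $\cU(1\ovt R_0)$ is nothing but a group homomorphism $\pi:\Gamma\to\cU(R_0)$, and the perturbed action is $\be_g=\be_g\ot\Ad\pi(g)$ (Bernoulli on the first leg); since the Bernoulli action is mixing, the fixed-point algebra is exactly $1\ot(\pi(\Gamma)'\cap R_0)$. Your justification that this relative commutant is nonzero (``the image generates at most a hyperfinite subalgebra'') is a non sequitur: hyperfinite subalgebras of $R_0$ can be irreducible. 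Concretely, take $\Gamma=\F_2\times\Z$, which is covered by the proposition, let $\Lambda$ be a $2$-generated ICC amenable quotient of $\F_2$ (e.g.\ the lamplighter group), identify $R_0=L(\Lambda)$ and let $\pi$ be the composition $\Gamma\to\F_2\to\Lambda\subset\cU(L(\Lambda))$. Then $v_g=1\ot\pi(g)$ is a $1$-cocycle for your $\al$ and $\Ad v\circ\al$ \emph{is} ergodic, because $\pi(\Gamma)''=R_0$ has trivial relative commutant. So your model action admits an ergodic cocycle perturbation and cannot witness $\Gamma\notin\Erg$. (For property (T) groups the relative commutant $\pi(\Gamma)'\cap R_0$ is indeed always nontrivial, but the correct reason is Bekka's theorem on amenable representations -- a property (T) group cannot generate an amenable II$_1$ factor -- not the reason you give; and in any case this does not rescue the product case.)

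This is precisely why the paper does not tensor with a II$_1$ factor carrying the trivial action: it makes the trivially-acted part \emph{abelian}, $A=L(\Z)$, realized inside the hyperfinite II$_1$ factor as $R=(R_0^{\ot\Gamma})\rtimes\Z$ with $\Z$ acting diagonally by an outer automorphism (so $R$ stays a hyperfinite II$_1$ factor), $\Gamma$ acting by Bernoulli on $R_0^{\ot\Gamma}$ and trivially on $L(\Z)$. Moreover, the superrigidity statement actually available for such von Neumann algebraic cocycles is the generalization to crossed products and tensor products in \cite[Theorem 7.1]{VV14}; it does not give your clean untwisting $v_g=u^*w_g\al_g(u)$ with $w_g\in\cU(M^\al)$, but rather a projection $p\in M_n(\C)\ot A$, a vector $\xi\in(M_{1,n}(\C)\ot L^2(M))p$ and a representation $\pi:\Gamma\to\cU(p(M_n(\C)\ot A)p)$ with $v_g(\id\ot\al_g)(\xi)=\xi\pi(g)$. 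Because $A$ is abelian, $1\ot A$ commutes with $p$ and with $\pi(\Gamma)$, and one concludes that either $\xi\xi^*$ is a nonscalar invariant element or, after normalizing $\xi$ to a coisometry $V$, the algebra $V(1\ot A)V^*$ is a diffuse subalgebra of the fixed-point algebra; in either case $\Ad v\circ\al$ is not ergodic. With your choice of a nonabelian ``constant'' part $A=R_0$ this commutation, and with it the non-ergodicity, breaks down, exactly as the counterexample above shows. Finally, a smaller but real slip: $L^\infty(\{0,1\})^{\ot\Gamma}$ is abelian, not the hyperfinite II$_1$ factor, so your $M$ would not even be a factor; one must use the noncommutative Bernoulli action, e.g.\ on $(M_2(\C),\tau)^{\ot\Gamma}$ or $R_0^{\ot\Gamma}$.
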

\begin{proof}
Let $R_0$ be a copy of the hyperfinite II$_1$ factor and choose an outer action $\Z \actson R_0$. Write $R_1 = R_0^{\ot \Gamma}$ and consider the diagonal action $\Z \actson R_1$. Then write $R = R_1 \rtimes \Z$ and define $\Gamma \actson^\al R$ by taking the Bernoulli action $\Gamma \actson R_1$ and the trivial action of $\Gamma$ on $L(\Z) \subset R$.

Let $v \in \cC(\al)$ and $\be = \Ad v \circ \al$. We have to prove that $\be$ is not ergodic. A finite group action on a II$_1$ factor is never ergodic, so that we may assume that $\Gamma$ is an infinite property~(T) group or the direct product of a nonamenable group and an infinite group. In both cases, Popa's cocycle superrigidity theorems for the Bernoulli actions of $\Gamma$ (see \cite{Pop01,Pop06}) admit a generalization to crossed products and tensor products, as written in detail in \cite[Theorem 7.1]{VV14}. Writing $A = L(\Z) \subset R$, we find a projection $p \in M_n(\C) \ot A$, a partial isometry $V \in (M_{1,n}(\C) \ot R)p$ and a unitary representation $\pi : \Gamma \to \cU(p(M_n(\C) \ot A)p)$ such that $V^* V = p$, $VV^*$ is $\be$-invariant and $v_g (\id \ot \al_g)(V) = V \pi(g)$ for all $g \in \Gamma$. Then $V (1 \ot A) V^*$ is a diffuse subalgebra of a corner of $M^\be$, so that $\be$ is not ergodic.
\end{proof}

We conclude this section by proving that the separability assumption in Theorem~\ref{thmB} is essential.

\begin{remark}\label{rem.non-separable-actions}
Let $I$ be an uncountable set and define $M$ as the $I$-fold tensor product of $(M_2(\C),\tau)$. Then every infinite countable group $\Gamma$ admits an outer action $\al$ on $M$ for which no cocycle perturbation is ergodic. To construct $\alpha$, identify $I$ with the disjoint union of $\Gamma$ and an uncountable set $J$. For every subset $K \subset I$, we have the corresponding subalgebra $M_K \subset M$ given by the $K$-fold tensor product. In particular, $M = M_\Gamma \ovt M_J$ and we define $\al_g = \be_g \ot \id$, where $\Gamma \actson^\be M_\Gamma$ is the usual Bernoulli action. Then $\al$ is an outer action. If $v \in \cC(\al)$, the countability of $\Gamma$ implies that we can find a countable subset $K \subset J$ such that $v_g \in M_\Gamma \ovt M_K$ for all $g \in \Gamma$. But then $1 \ot M_{J \setminus K}$ belongs to the fixed point algebra of $\Ad v \circ \al$. So $\al$ has no ergodic cocycle perturbation.
\end{remark}

\section{Concluding remarks and open problems}\label{sec.problems}
By Lemma \ref{lem.easy-implication}, we know that if $\vphi$ is an ergodic state on some factor $M$ then the unitary representation $(\sigma_t^\vphi)_{t \in \R}$ on $L^2(M,\vphi) \ominus \C$ is weakly mixing. It is thus natural to ask if Theorem~\ref{thmA} can be strengthened in the following way.

\begin{problem} Let $M$ be a type III$_1$ factor with separable predual. Can we find $\vphi \in \Serg(M)$ such that the unitary representation $(\sigma_t^\vphi)_{t \in \R}$ on $L^2(M,\vphi) \ominus \C$ is mixing, or even a multiple of the regular representation?
\end{problem}

Note that examples of such mixing states are given by the quasi-free states on Araki-Woods factors (or free-quasi-free states on free Araki-Woods factors) associated to mixing orthogonal representations of $\R$ by the CAR functor.

The next problem asks for a relative version of Theorem~\ref{thmA}.

\begin{problem}\label{problem.inclusion}
Let $N \subset M$ be an inclusion of von Neumann algebras with faithful normal conditional expectation $E : M \to N$. When does there exist an ergodic $\vphi \in \St(M)$ that is $E$-invariant: $\vphi \circ E = \vphi$~?
\end{problem}

In Theorem \ref{thm.ergodic-inclusion}, we provide a criterion and a necessary condition towards answering Problem \ref{problem.inclusion}. In Remark \ref{rem.bicentralizer}, we then relate Problem \ref{problem.inclusion} to Connes' bicentralizer problem. Before presenting this, we need some notation and preliminaries.

Recall that $\St_1(M)$ denotes the set of normal states on $M$. We define $\St_1(M,E) = \{\vphi \in \St_1(M) \mid \vphi \circ E = \vphi\}$, $\St(M,E) = \St_1(M,E) \cap \St(M)$ and $\Serg(M,E) = \St(M,E) \cap \Serg(M)$. We first exclude the trivial case $N = \C 1$, because then $\St(M,E)$ is a singleton. Since any $\vphi \in \Serg(M,E)$ is in particular an ergodic state on $N$, the set $\Serg(M,E)$ can only be nonempty if $N$ is a factor of type III$_1$. So we only consider Problem \ref{problem.inclusion} when $N$ is a III$_1$ factor.

Given a type III$_1$ factor $N$ and an inclusion $N \subset M$ with expectation $E : M \to N$, for every $\vphi \in \St(M,E)$, the \emph{relative bicentralizer} $B(N\subset M,\vphi)$ was studied in \cite{AHHM18}. It is defined as the von Neumann subalgebra of elements $x \in M$ that satisfy $a_n x - x a_n \to 0$ $*$-strongly for every bounded sequence $a_n \in N$ with $\|a_n \vphi - \vphi a_n\| \to 0$.

By \cite[Theorem A]{AHHM18}, the relative bicentralizer $B(N \subset M,\vphi)$ does not depend on the choice of $\vphi \in \St(M,E)$ up to the following canonical isomorphism. When $\psi \in \St(M,E)$, by Connes and St{\o}rmer's \cite[Theorem 4]{CS76}, we can choose a sequence of unitaries $u_n \in \cU(N)$ such that $\|u_n\vphi u_n^* - \psi \| \to 0$. By \cite[Theorem A]{AHHM18}, there is a unique $*$-isomorphism $\be$ from $B(N\subset M,\vphi)$ to $B(N \subset M,\psi)$ satisfying $u_nxu_n^* \to \be(x) $ $*$-strongly for all $x \in B(N \subset M,\vphi)$.

Note that the modular automorphism group $\si^\vphi$ leaves $B(N \subset M,\vphi)$ globally invariant. Therefore, the modular automorphism group of $\vphi|_{ B(N \subset M,\vphi)}$ is given by the restriction $\si^\vphi|_{B(N \subset M,\vphi)}$. Since $\|u_n\vphi u_n^* - \psi \| \to 0$, the isomorphism $\beta$ must be state-preserving and $\be$ intertwines $\si^\vphi$ and $\si^\psi$.

Finally recall that $E : M \to N$ gives rise to a canonical embedding $c(N) \subset c(M)$ of the continuous cores. The following is a version of Theorem~\ref{thmA} for inclusions.

\begin{theorem}\label{thm.ergodic-inclusion}
Let $N$ be a factor of type III$_1$ and $N \subset M$ an inclusion with faithful normal conditional expectation $E : M \to N$. Assume that $M_*$ is separable. Let $\om$ be a free ultrafilter on $\N$. Consider the following statements.
\begin{enumlist}
\item $M$ admits a faithful normal state that is ergodic and $E$-invariant, i.e.\ $\Serg(M,E) \neq \emptyset$.
\item $\Serg(M,E)$ is a dense $G_\delta$ subset of $\St_1(M,E)$.
\item $B(N \subset M,\vphi)^{\si^\vphi} = \C 1$ for some (or equivalently all) $\vphi \in \St(M,E)$.
\item $c(N) \subset c(M)$ is irreducible.
\end{enumlist}
Then statements 1, 2 and 3 are equivalent and they imply statement 4.
\end{theorem}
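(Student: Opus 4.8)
The plan is to run a Baire category argument directly on the Polish space $\St(M,E)$, following the template of the proof of Theorem~\ref{thmA} but carrying the conditional expectation $E$ along throughout. The equivalence $2 \Rightarrow 1$ is again trivial. For $1 \Rightarrow 3$: if $\vphi \in \Serg(M,E)$, then in particular $M_\vphi = \C 1$, and since $B(N \subset M, \vphi)^{\si^\vphi}$ is contained in $M^{\si^\vphi} = M_\vphi$ (the relative bicentralizer is $\si^\vphi$-invariant, so its fixed point algebra sits inside the global fixed point algebra), we get $B(N \subset M, \vphi)^{\si^\vphi} = \C 1$. The "equivalently all" clause in statement~3 is handled by the isomorphism $\be : B(N \subset M, \vphi) \to B(N \subset M, \psi)$ recalled before the theorem, which intertwines $\si^\vphi$ and $\si^\psi$ and hence restricts to an isomorphism between the two fixed point algebras. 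So the content is $3 \Rightarrow 2$, and the implication to statement~4.

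For $3 \Rightarrow 2$, I would prove the analogue of Lemma~\ref{lem.open-dense} relative to $E$: for each nonscalar $x \in M$, the set $U_x^E = U_x \cap \St(M,E)$ is dense open in $\St(M,E)$, where $U_x$ is as in Lemma~\ref{lem.open-dense} (with the $1/5$ constant). Openness is immediate as before. For density, fix $\psi \in \St(M,E)$ and let $E_\psi : M \to M_\psi$ be the $\psi$-preserving expectation; split into Case~1 ($\|x - E_\psi(x)\|_\psi \ge \frac15 \|x - \psi(x)1\|_\psi$) and Case~2. Case~1 is verbatim Lemma~\ref{lem.average}. In Case~2 the element $y = E_\psi(x) - \psi(x)1$ lies in $M_\psi$, and the point is to perturb $\psi$ inside $\St(M,E)$ to kill $E_{M_\vphi}(y)$. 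Here is where statement~3 enters: instead of the II$_1$ factor $M^{\om,\psi}$ of Lemma~\ref{lem.asympt-centr-factor}, one uses that $N^{\om,\psi|_N}$ — the asymptotic centralizer of $\psi|_N$ on the type III$_1$ factor $N$ — is a II$_1$ factor by Lemma~\ref{lem.asympt-centr-factor}, and that the hypothesis $B(N \subset M, \psi)^{\si^\psi} = \C 1$ forces $E_\psi(x)$, hence $y$, to have conditional expectation onto the relative commutant of $N^{\om,\psi|_N}$ equal to its trace; more precisely, one wants $y$ to be "asymptotically centralizing relative to $N$" so that Popa's local quantization (Theorem~\ref{lem.local-quantization}) applied inside $N^{\om,\psi|_N}$ produces a finite-dimensional abelian $A \subset N$ with $\|E_{A' \cap N^{\om}}(y)\|_2$ small. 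The partition of unity $(p_i)$ is then chosen inside $N$, the perturbed state $\vphi = \sum_i \lambda_i p_i \psi p_i$ is again $E$-invariant because $p_i \in N$ and $E$ is $N$-bimodular, and the remaining estimates are identical to those in Lemma~\ref{lem.open-dense}. Then, exactly as in the proof of Theorem~\ref{thmA}, $\Serg(M,E) = \bigcap_n U_{x_n}^E$ for a $\|\cdot\|$-strongly dense sequence of nonscalar $x_n$ in the unit ball, giving a dense $G_\delta$.

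For $3 \Rightarrow 4$ (or $1 \Rightarrow 4$), take $\vphi \in \Serg(M,E)$ and use that $E : M \to N$ lifts to $c(E) : c(M) \to c(N)$, with $c(M) = M \rtimes_{\si^\vphi} \R \supset N \rtimes_{\si^{\vphi|_N}} \R = c(N)$. An element of $c(N)' \cap c(M)$ commutes in particular with $L(\R) \subset c(N)$, and as in the proof of Lemma~\ref{lem.easy-implication} the relative commutant of $1 \ot L(\R)$ inside $M \ovt B(L^2 \R)$ is $M \ovt L(\R)$ on which $\theta_t = \si_t^\vphi \ot \id$; since $M_\vphi = \C1$ this already gives $L(\R)' \cap c(M) = L(\R)$, and then commuting with $N \rtimes \R \supset N$ forces the element into $L(\R)$ and to commute with $N$, so it lies in $(L(\R)' \cap c(M)) \cap N' = L(\R) \cap \cZ(c(N))$-type position; a short computation using ergodicity of $\si^\vphi$ on $M$ and type III$_1$-ness of $N$ (so that $\cZ(c(N)) = \C1$) pins it down to $\C1$. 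The routine point to check carefully is that the only $\theta$-fixed elements of $M \ovt L(\R)$ that also commute with $N$ are scalars — this follows because such an element is $\si^\vphi$-invariant in $M$, hence scalar.

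**Expected main obstacle.** The delicate step is Case~2 of the relative Lemma~\ref{lem.open-dense}: one must verify that the hypothesis $B(N \subset M,\psi)^{\si^\psi} = \C1$ genuinely lets $y = E_\psi(x) - \psi(x)1$ be treated as a trace-zero element of the II$_1$ factor $N^{\om,\psi|_N}$ to which local quantization applies, i.e.\ that $y$ is asymptotically centralized by $N$-sequences and that its $N^\om$-relative-commutant projection is scalar. This requires relating $E_\psi(x) \in M_\psi$ to the relative bicentralizer — concretely, showing that an element of $M_\psi$ has the asymptotic centralizing property with respect to centralizing sequences in $N$, which is where the definition of $B(N \subset M,\psi)$ and the identification of $N^{\om,\psi|_N}$ with a centralizer (via \cite[Lemma 4.36]{AH12}) come in. Everything else is a faithful transcription of the arguments already given, with the single bookkeeping constraint that all perturbing projections are chosen inside $N$ so that $E$-invariance is preserved.
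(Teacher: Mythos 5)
Your overall architecture is the same as the paper's: a Baire category argument in $\St(M,E)$, a relative version of Lemma~\ref{lem.open-dense} in which the perturbing partition of unity is chosen inside $N$ so that the perturbed state stays $E$-invariant, and the passage to statement~4 via $L(\R)' \cap c(M) = L(\R)$ together with factoriality of $c(N)$. However, the step you flag as the ``expected main obstacle'' is exactly the heart of the proof, and the way you propose to resolve it would fail. You suggest proving ``that an element of $M_\psi$ has the asymptotic centralizing property with respect to centralizing sequences in $N$'', i.e.\ essentially that $M_\psi \subset B(N \subset M,\psi)$; this is false in general and is not what is needed. What is needed is the dual statement, a commuting square: every $x \in (N^{\om,\psi})' \cap M^{\om,\psi}$ satisfies $E_{M_\psi}(x) \in \C 1$. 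The paper obtains this from \cite[Proposition 3.3]{AHHM18}, which places $(N^{\om,\psi})' \cap M^{\om,\psi}$ inside the ultrapower of the relative bicentralizer $B(N \subset M,\psi)$; then the weak limit $z = \lim_{n \to \om} x_n$ of a representing centralizing sequence lies in $M_\psi \cap B(N \subset M,\psi) = B(N \subset M,\psi)^{\si^\psi} = \C 1$ by hypothesis~3, and one checks that $z = E_{M_\psi}(x)$. Without this input, your plan never connects hypothesis~3 to the statement you actually use, namely $E_{(N^{\om,\psi})' \cap M^{\om,\psi}}(y) = 0$ for $y \in M_\psi$ with $\psi(y) = 0$.

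A related point: once that commuting square is available, you cannot apply Theorem~\ref{lem.local-quantization} as quoted, since $y$ lies in $M^{\om,\psi}$ and not in the II$_1$ factor $N^{\om,\psi}$; one needs the relative form of Popa's local quantization \cite[Theorem A.1.2]{Pop92} for the inclusion $N^{\om,\psi} \subset M^{\om,\psi}$, whose hypothesis is precisely the vanishing of the expectation onto $(N^{\om,\psi})' \cap M^{\om,\psi}$ — which is why the commuting square claim is indispensable rather than a technical afterthought. The remainder of your plan (Case~1 via Lemma~\ref{lem.average}; the choice $p_i \in N$ so that $\vphi = \sum_i \lambda_i p_i \psi p_i \in \St(M,E)$ by $N$-bimodularity of $E$; $1 \Rightarrow 3$ because $B(N \subset M,\vphi)^{\si^\vphi} \subset M_\vphi$; and $c(N)' \cap c(M) \subset L(\R)' \cap c(M) = L(\R) \subset c(N)$ followed by $\cZ(c(N)) = \C 1$) matches the paper's proof. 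Only note that your aside claiming that the only $\theta$-fixed elements of $M \ovt L(\R)$ commuting with $N$ are scalars is false as stated (all of $1 \ot L(\R)$ qualifies), but it is not needed: factoriality of $c(N)$ already closes the argument.
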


\begin{remark}\label{rem.bicentralizer}
In \cite{Mar23}, it is conjectured that $4 \Rightarrow 3$, so that conjecturally all statements in Theorem \ref{thm.ergodic-inclusion} are equivalent. Moreover in \cite[Corollary 8.16]{Mar23}, it is proven that this conjecture is true whenever $N$ itself has trivial bicentralizer. The latter is known for large families of type III$_1$ factors, including the amenable type III$_1$ factor and all almost periodic type III$_1$ factors.
\end{remark}

\begin{remark}
A necessary condition for the irreducibility of $c(N) \subset c(M)$ is that $(N' \cap M)^{\si^\vphi} = \C 1$ for $\vphi \in \St(M,E)$. But this is far from being a sufficient condition. Indeed, given any type III$_1$ factor $N$ and given any ergodic action $\R \actson^\zeta P$ of $\R$ on a II$_1$ factor $P$, we consider the trace scaling action $\R \actson^\theta c(N)$. Since $\theta \ot \zeta$ is a trace scaling action of the II$_\infty$ factor $c(N) \ovt P$, we have that $M = (c(N) \ovt P)^{\theta \ot \zeta}$ is a III$_1$ factor with continuous core $c(M) = c(N) \ovt P$. Then $N \cong N \ot 1$ is a von Neumann subalgebra of $M$ and $\id \ot \tau$ defines a conditional expectation of $M$ onto $N$. Viewing $c(M)$ as $c(N) \ovt P$, we find that $c(N)' \cap c(M) \cong P$. Since $N \subset c(N)$ is irreducible and $\zeta$ is ergodic, also $N \subset M$ is irreducible, but $c(N) \subset c(M)$ is not irreducible.
\end{remark}

\begin{proof}[{Proof of Theorem \ref{thm.ergodic-inclusion}}]
We start by proving the nontrivial implication $3 \Rightarrow 2$. Note that by the discussion preceding the theorem, statement $3$ is independent of the choice of $\vphi \in \St(M,E)$. Fix $\vphi \in \St(M,E)$ and assume that $B(N \subset M,\vphi)^{\si^\vphi} = \C 1$. Consider the von Neumann algebra $M^{\om,\vphi}$ with its faithful normal tracial state $\vphi^\om$. We claim that
$$\begin{matrix} M_\vphi & \subset & M^{\om,\vphi} \\ \cup & & \cup \\ \C 1 & \subset & (N^{\om,\vphi})' \cap M^{\om,\vphi} \end{matrix}$$
is a commuting square. To prove this claim, take $x \in (N^{\om,\vphi})' \cap M^{\om,\vphi}$ and write $y = E_{M_\vphi}(x)$. We have to prove that $y \in \C 1$.

Represent $x$ by the bounded sequence $(x_n)_{n \in \N}$ satisfying $\lim_{n \to \om} \|x_n \vphi - \vphi x_n\| = 0$. Let $z = \lim_{n \to \om} x_n$ weakly. Since $x_n \vphi \to z \vphi$ and $\vphi x_n \to \vphi z$ weakly, it follows that $z \vphi = \vphi z$. So, $z \in M_\vphi$ and we conclude that $z = y$. By \cite[Proposition 3.3]{AHHM18}, $x$ belongs to the Ocneanu ultrapower of $B(N \subset M,\vphi)$. This implies that $z \in B(N \subset M,\vphi)$. Since $z \in M_\vphi$ and $B(N \subset M,\vphi)^{\si^\vphi} = \C 1$, it follows that $z \in \C 1$. So, $y \in \C 1$ and the claim is proven.

By the claim, for every $x \in M_\vphi$ with $\vphi(x) = 0$, we have that $E_{(N^{\om,\vphi})' \cap M^{\om,\vphi}}(x) = 0$. For every $\eps > 0$, Popa's local quantization \cite[Theorem A.1.2]{Pop92} provides a finite dimensional abelian von Neumann subalgebra $A \subset N^{\om,\vphi}$ such that $\|E_{A' \cap M^{\om,\vphi}}(x)\|_{\vphi^\om} \leq \eps$. We can then repeat the proof of Theorem~\ref{thmA} and obtain that $2$ holds.

The implications $2 \Rightarrow 1 \Rightarrow 3$ are trivial. To conclude the proof, we show $1 \Rightarrow 4$. Fix $\vphi \in \Serg(M,E)$ and write $c(M) = M \rtimes_{\sigma^\vphi} \R$. We have seen in the proof of Lemma \ref{lem.easy-implication} that $L(\R)' \cap c(M) = L(\R)$. Therefore, $c(N)' \cap c(M) \subset c(N)$, which forces $c(N) \subset c(M)$ to be irreducible, because $c(N)$ is a factor.
\end{proof}

\begin{problem}
Two potential generalizations of Theorem~\ref{thmB} are open and related to Theorem~\ref{thmA}.
\begin{itemlist}
\item Under which conditions, actions of amenable, \emph{locally compact, noncompact} groups on II$_1$ factors admit ergodic cocycle perturbations?
\item Under which conditions, amenable group actions on arbitrary factors admit ergodic cocycle perturbations?
\end{itemlist}
\end{problem}

Assume that $G$ is a locally compact abelian group and that $G \actson^\al M$ is an action on a factor $M$ such that for all $g \in G \setminus \{e\}$, the automorphism $\al_g$ is outer. A necessary condition for $\al$ to admit an ergodic cocycle perturbation is that the crossed product $M \rtimes_\al G$ is a factor. Indeed, if $v \in \cC(\al)$ is a $1$-cocycle such that $\be = \Ad v \circ \al$ is ergodic, we have $M \rtimes_\be G \cong M \rtimes_\al G$. Also, $L(G)' \cap M \rtimes_\be G \subset L(G)$, by the ergodicity of $\be$. Since the map $g \mapsto \be_g$ is faithful, it then follows that $M \rtimes_\be G$ is a factor.

On the other hand, the outerness of the individual automorphisms $\al_g$, $g \neq e$, is not sufficient to guarantee that $M \rtimes_\al G$ is a factor. For instance, an obvious modification of \cite[Example 8]{MV22} provides an action of $\R$ on the hyperfinite II$_1$ factor $M$ by outer automorphisms such that $M \rtimes \R$ is not a factor.

Recall that an action of a locally compact group $G$ on a factor $M$ is called \emph{strictly outer} if $M' \cap M \rtimes G = \C 1$. Then $M \rtimes G$ obviously is a factor. A more precise version of the first question above would therefore be: does every strictly outer action of an amenable, locally compact, noncompact group $G$ on a II$_1$ factor $M$ admit an ergodic cocycle perturbation?

The second question above seems to be even more difficult. While it is conceivable that a variant of Theorem~\ref{thmB} can be proven for \emph{state preserving} actions of countable groups, we have no idea what happens for general automorphisms and automorphism groups of arbitrary factors.

\end{document}